\newtheorem*{corollary*}{Corollary}
\newtheorem{theorem}{Theorem}[section]
\newtheorem{corollary}[theorem]{Corollary}
\newtheorem{proposition}[theorem]{Proposition}
\newtheorem{question}[theorem]{Question}
\newtheorem*{claim*}{Claim}
\newtheorem*{conjecture}{Conjecture}
\theoremstyle{definition}
\newtheorem*{theorem }{Theorem}
\newtheorem*{conjecture }{Conjecture}
\newtheorem{example}[theorem]{Example}
\theoremstyle{remark}
\numberwithin{equation}{theorem}
\renewcommand*\env@matrix[1][\
arraystretch]{%
  \edef\arraystretch{#1}%
  \hskip -\arraycolsep
  \let\@ifnextchar\new@ifnextchar
  \array{*\c@MaxMatrixCols c}}
\renewcommand{\mod}{\operatorname{mod}}
\newcommand{\Ext}{\operatorname{Ext}}
\newcommand{\Tor}{\operatorname{Tor}}
\newcommand{\pd}{\operatorname{pd}}
\newcommand{\id}{\operatorname{id}}
\newcommand{\Tr}{\operatorname{Tr}}
\newcommand{\TF}{\operatorname{TF}}
\newcommand{\Dom}{\operatorname{Dom}}
\newcommand{\End}{\operatorname{End}}
\newcommand{\ev}{\operatorname{ev}}
\newcommand{\Hom}{\operatorname{Hom}}
\newcommand{\add}{\operatorname{\mathrm{add}}}
\renewcommand{\mod}{\operatorname{mod}}
\newcommand{\domdim}{\operatorname{domdim}}
\newcommand{\gldim}{\operatorname{gldim}}
\begin{document}

\title{A bimodule approach to dominant dimension}
\date{\today}

\subjclass[2010]{Primary 16G10, 16E10}

\keywords{Gorenstein projective modules, Nakayama conjecture, dominant dimension,reflexive modules}

\author{Ren\'{e} Marczinzik}
\address{Institute of algebra and number theory, University of Stuttgart, Pfaffenwaldring 57, 70569 Stuttgart, Germany}
\email{marczire@mathematik.uni-stuttgart.de}

\begin{abstract}
We show that a finite dimensional algebra $A$ has dominant dimension at least $n \geq 2$ if and only if the regular bimodule $A$ is $n$-torsionfree if and only if $A \cong \Omega^{n}(\Tr(\Omega^{n-2}(V)))$ as $A$-bimodules, where $V=\Hom_A(D(A),A)$ is the canonical $A$-bimodule in the sense of \cite{FKY}. We apply this to give new formulas for the Hochschild homology and cohomology for algebras with dominant dimension at least two and show a new relation between the first Tachikawa conjecture, the Nakayama conjecture and Gorenstein homological algebra.
\end{abstract}

\maketitle
\section*{Introduction}
The dominant dimension of a finite dimensional algebra $A$ with a minimal injective coresolution $(I_i)$ of the regular module $A$ is defined as the smallest $n$ such that $I_n$ is not projective or infinite in case no such $n$ exists. One of the most important homological conjecture for finite dimensional algebras is the Nakayama conjecture, introduced by Nakayama in 1958, that states that an algebra $A$ has infinite dominant dimension if and only if $A$ is selfinjective. We refer to the survey \cite{Yam} for the Nakayama and related conjectures.
On the other hand, the dominant dimension also has very practical applications. Namely, by the Morita-Tachikawa correspondence, see for example \cite{Ta} chapter 10, an algebra $A$ has dominant dimension at least two if and only if $A$ has a minimal faithful projective-injective module with the double centraliser property. This can be used to provide computation-free proofs of the Schur-Weyl duality or Soergel's double centraliser theorem for blocks of category $\mathcal{O}$, see \cite{KSX}. Recently, the dominant dimension was also used in the definition of higher Auslander algebras and the correspondence with cluster tilting modules by Iyama, see \cite{Iya}, generalising Auslander's classical homological characterisation of representation-finite algebras. For a recent survey on the dominant dimension and applications, we refer to \cite{Koe}.

In this article we give a new bimodule characterisation of the dominant dimension. Of special interest is when an algebra has dominant dimension at least two, since this property is equivalent to having the double centraliser property with respect to a minimal faithful projective-injective module. For this reason we state our main theorem first in this special case.
Recall that an $A$-module $M$ is called reflexive in case the natural evaluation map $\ev_M: M \rightarrow M^{**}$ is an isomorphism, where $N^{*}:=\Hom_A(N,A)$ denotes the $A$-dual of a module $N$.
It is a classical result from linear algebra that every finite dimensional vector space is reflexive and more generally every projective module over an algebra is reflexive, however not every module is reflexive and it is an interesting problem to give a classification of reflexive modules over certain rings, see for example \cite{E}.
Letting $A^e=A \otimes_K A^{op}$ denote the enveloping algebra of $A$, it is well known that $A$-bimodules correspond to $A^e$-modules. Following \cite{FKY}, the canonical bimodule $V$ of $A$ is defined as $V:= \Hom_A(D(A),A)$.
We will see that $V \cong A^{*}$ as right $A^e$-modules, where $A^{*}$ is the $A^e$-dual of the bimodule $A$.
\begin{theorem }
Let $A$ be a finite dimensional algebra. Then the following are equivalent:
\begin{enumerate}
\item $A$ has dominant dimension at least two.
\item $A$ as a bimodule is reflexive.
\item $A$ is a 2-syzygy module as a bimodule, that is $A \cong \Omega^2(N)$ for some $A$-bimodule $N$.
\item $A \cong \Omega^2(\Tr(V))$ as $A$-bimodules.

\end{enumerate}

\end{theorem }
Here $\Tr(X)$ of a module $X$ denotes the Auslander-Bridger transpose of $X$.
The authors in \cite{FKY} used the canonical bimodule $V$ to give for the first time a characterisation of algebras with dominant dimension at least two via a bimodule isomorphism.
Namely, they showed that an algebra $A$ has dominant dimension at least two if and only if $D(A) \cong D(A) \otimes_A V \otimes_A D(A)$ as $A$-bimodules. It is a natural question whether one can characterise higher dominant dimension via a similiar bimodule isomorphism using the canonical bimodule $V$. Our next result will reveal such a bimodule isomorphism condition.
In \cite{AB}, Auslander and Bridger generalised the notion of being reflexive for a module by saying that an $A$-module $M$ is $n$-torsionfree if $\Ext_A^i(\Tr(M),A)=0$ for $i=1,...,n$. Then a module $M$ is reflexive if and only if it is $2$-torsionfree.
Another useful tool for the homological theory of noetherian rings introduced by Auslander and Bridger in \cite{AB} are the functors $J_k:= \Tr \Omega^k$ that we call higher Auslander-Bridger translates in the following. Using these notions we can generalise the previous theorem as follows:
\begin{theorem }
Let $A$ be a finite dimensional algebra and $n \geq 2$.
Then the following are equivalent:
\begin{enumerate}
\item $A$ has dominant dimension at least $n$.
\item $A$ as a bimodule is $n$-torsionfree.
\item $A$ as a bimodule is an $n$-th syzygy module, that is $A \cong \Omega^n(N)$ for some $A$-bimodule $N$.
\item $A \cong \Omega^{n}(J_{n-2}(V))$ as $A$-bimodules.
\end{enumerate}

\end{theorem }

The previous theorem gives a new viewpoint of the Nakayama conjecture in terms of the recently introduced $\mho$-quiver of an algebra $A$ by Ringel and Zhang, see \cite{RZ}, by showing that $A$ has infinite dominant dimension if and only if $A$ as a bimodule is the end point of an infinite path in the $\mho$-quiver of the enveloping algebra of $A$.

The previous theorem also gives a new connection between the dominant dimension and Gorenstein homological algebra.
Gorenstein homological algebra can be viewed as a generalisation of classical homological algebra and its main concern is the classification of Gorenstein projective modules and the singularity category for Gorenstein algebras. We refer for example to the book \cite{EJ} and the survey \cite{Che} for more on Gorenstein homological algebra.
Recall that an $A$-module $M$ is called Gorenstein projective in case $\Ext_A^i(M,A)=0=\Ext_A^i(\Tr(M),A)$ for all $i>0$. 
We pose the following new homological conjecture in this article, that we call the Gorenstein bimodule conjecture:
\begin{conjecture }
Let $A$ be a finite dimensional algebra. Then $A$ as a bimodule is Gorenstein projective if and only if $A$ is selfinjective.

\end{conjecture }
The question when $A$ as a bimodule is Gorenstein projective seems to be first considered by Shen in \cite{S}, where it was proven that in case $A$ is Gorenstein, the Gorenstein bimodule conjecture is true.
More generally, in \cite{Mar}, the conjecture was shown to be true for left weakly Gorenstein algebras.
Recall that the first Tachikawa conjecture states that in case $\Ext_A^i(D(A),A)=0$ for all $i>0$ then $A$ is selfinjective, see for example \cite{Yam} for this conjecture and related conjectures. Note that by results of Mueller in \cite{Mue} the truth of the Nakayama conjecture for all algebras would imply the truth of the first Tachikawa conjecture for all algebras, but for a fixed algebra it is not known whether the truth of the Nakayama conjecture implies the truth of the first Tachikawa conjecture. For example any local non-selfinjective algebra has dominant dimension zero and thus the Nakayama conjecture holds for such algebras, while it is not known whether the first Tachikawa conjecture holds for local algebras.
As a corollary of our main result we can relate the Gorenstein bimodule conjecture to the first Tahchikawa conjecture and the Nakayama conjecture.
\begin{theorem }
Let $A$ be a fixed finite dimensional algebra.
Then $A$ satisfies the Gorenstein bimodule conjecture if and only if $A$ satisfies the first Tachikawa conjecture or the Nakayama conjecture.
\end{theorem }
As an application of the previous theorem, we will see that the Gorenstein bimodule conjecture holds for all finite dimensional algebras with finite finitistic dimension.
The connection between the Nakayama and the Gorenstein bimodule conjecture is particular strong for gendo-symmetric algebras $A$, where we will see that $A$ has infinite dominant dimension if and only if $A$ as a bimodule is Gorenstein projective. 
We profited from experimenting with the GAP-package QPA that motivated several results of this article, see \cite{QPA}.
\section{Preliminaries}
Unless otherwise stated we assume that $A$ is a non-semisimple connected finite dimensional algebra over a field $K$ and modules are finitely generated left modules.
$\mod-A$ denotes the module category of $A$ and $\underline{\mod}-A$ the stable module category. $D(-):=\Hom_K(-,K)$ denotes the natural duality of a finite dimensional algebra $A$ over a field $K$.
We give a quick summary of definitions for the dominant dimension and $n$-torsionfree modules, where we refer for example to \cite{Ta} and \cite{AB} for more details.
The \emph{enveloping algebra} of $A$ is defined as $A^e:=A \otimes_K A^{op}$ and it is well known that the module category of $A^e$ is equivalent to the category of $A$-bimodules.
A module $M$ with minimal injective coresolution $(I_i)$ is defined to have \emph{dominant dimension} $\domdim(M)$ equal to $n$, when $n$ is the smallest integer such that $I_n$ is not projective, or infinite in case no such $n$ exists.
The dominant dimension of the algebra $A$ is defined as the dominant dimension of the regular module and it is well known that this also coincides with the dominant dimension of the right regular module and the dominant dimension of $A$ as a bimodule over the enveloping algebra $A^e$, see \cite{Mue}.
Following \cite{FKY}, we call $V:=\Hom_A(D(A),A)$ the \emph{canonical bimodule}.
Algebras with dominant dimension at least one are also called \emph{QF-3} algebras and this condition is equivalent to the existence of a minimal faithful projective-injective $A$-module of the form $eA$ for some idempotent $e$.
The \emph{Morita-Tachikawa correspondence} states that an algebra $A$ has dominant dimension at least two if and only if $A \cong \End_B(M)$ for an algebra $B$ with a generator-cogenerator $M$ of $\mod-B$. In this case $B \cong eAe$ when $eA$ is the minimal faithful projective-injective $A$-module and $B$ is called the \emph{base algebra} of $A$. $A$ is called gendo-symmetric in case $A$ has dominant dimension at least two and the base algebra $B$ of a $A$ is a symmetric algebra, that is $B \cong D(B)$.
We will need the following results about gendo-symmetric algebras:
\begin{theorem} \label{Fankoetheorem}
Let $A$ be a finite dimensional algebra. 
\begin{enumerate}
\item $A$ is gendo-symmetric if and only if $V \cong A$.
\item In case $A$ is gendo-symmetric, $\domdim(A) = \inf \{ i \geq 1 | \Ext_A^i(D(A),A) \neq 0 \}+1$.

\end{enumerate}

\end{theorem}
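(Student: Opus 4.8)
The plan is to deduce both statements from the Morita–Tachikawa correspondence (see \cite{Ta}) together with a bimodule description of $D(A)$ and of $V$ over the base algebra. I begin with general reductions. By hom–tensor duality there are natural $A$-bimodule isomorphisms $V=\Hom_A(D(A),A)\cong\Hom_A(D(A),D(D(A)))\cong D(D(A)\otimes_A D(A))$, so $V\cong A$ as bimodules if and only if $D(A)\otimes_A D(A)\cong D(A)$ as bimodules; in that case $D(A)\otimes_A V\otimes_A D(A)\cong D(A)\otimes_A D(A)\cong D(A)$, so the criterion of \cite{FKY} already gives $\domdim A\geq 2$. Hence it suffices to work under the standing assumption $\domdim A\geq 2$, write $A\cong\End_B(M)$ with $B=eAe$ the base algebra and $M$ a generator–cogenerator $B$-module regarded as a $B$-$A$-bimodule, and prove that $D(A)\otimes_A D(A)\cong D(A)$ holds if and only if $B$ is symmetric. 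The ingredients, all obtained from repeated hom–tensor duality and the fact that $M$ is a balanced $B$-module (so $\End_A(M_A)\cong B$), are: $D(A)\cong D(M)\otimes_B M$ as $A$-bimodules; $M\otimes_A D(M)\cong D(B)$ as $B$-bimodules (using $D(M\otimes_A D(M))\cong\End_A({}_AD(M))\cong B^{op}$ and $D(B^{op})\cong D(B)$ as $B$-bimodules); and the fact that the functor $\mathsf{G}\colon X\mapsto D(M)\otimes_B X\otimes_B M$ from $B$-bimodules to $A$-bimodules is fully faithful (the base-change property of the recollement attached to $e$, which rests on $eA\otimes_A Ae\cong B$).

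Granting these, part (1) is immediate: since
\[
D(A)\otimes_A D(A)\;\cong\;D(M)\otimes_B\big(M\otimes_A D(M)\big)\otimes_B M\;\cong\;D(M)\otimes_B D(B)\otimes_B M\;=\;\mathsf{G}(D(B))
\]
and $D(A)=D(M)\otimes_B B\otimes_B M=\mathsf{G}(B)$, the isomorphism $D(A)\otimes_A D(A)\cong D(A)$ is equivalent to $\mathsf{G}(D(B))\cong\mathsf{G}(B)$, hence by full faithfulness of $\mathsf{G}$ to $D(B)\cong B$, i.e. to $B$ being symmetric, i.e. to $A$ being gendo-symmetric. The main obstacle of the whole proof is the full faithfulness of $\mathsf{G}$ — equivalently, the legitimacy of cancelling $D(M)$ on the left and $M$ on the right — whose verification uses the precise relation between $M$ as a one-sided module and the projective–injective idempotent $e$.

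For part (2), keep the setup $A\cong\End_B(M)$ with $B$ symmetric. By M\"uller's theorem \cite{Mue}, $\domdim\End_B(M)=1+\inf\{i\geq 1:\Ext_B^i(M,M)\neq 0\}$, so it is enough to show that $\Ext_A^\bullet(D(A),A)$ and $\Ext_B^\bullet(M,M)$ first become nonzero in the same positive degree $p$. By the previous paragraph and the symmetry of $B$, ${}_AD(A)\cong\Hom_B(M,B)\otimes_B M$ with $\Hom_B(M,B)$ a projective left $A$-module (as $B\in\add M$); moreover $\Hom_A(\Hom_B(M,B)\otimes_B N,A)\cong\Hom_B(N,M)$ for every left $B$-module $N$, and $\Tor_j^B(\Hom_B(M,B),\Omega^\ell M)\cong D\,\Ext_B^{\,j+\ell}(M,M)$ for $j\geq 1$ (using $\Hom_B(M,B)\cong D(M)$ as right $B$-modules). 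Applying $\Hom_B(M,B)\otimes_B-$ to a minimal $B$-projective resolution $Q_\bullet\to M$ yields a complex of projective left $A$-modules surjecting onto $D(A)$; the error terms $\Tor_1^B(\Hom_B(M,B),\Omega^\ell M)\cong D\,\Ext_B^{\,\ell+1}(M,M)$ vanish for $\ell\leq p-2$, so this complex agrees with the genuine minimal projective $A$-resolution of $D(A)$ through degree $p-1$, whence $\Ext_A^i(D(A),A)\cong\Ext_B^i(M,M)=0$ for $1\leq i\leq p-1$ after applying $\Hom_A(-,A)\cong\Hom_B(-,M)$. Finally one must show $\Ext_A^p(D(A),A)\neq 0$: a dimension shift identifies it with $\Ext_A^1(\Hom_B(M,B)\otimes_B\Omega^{p-1}M,A)$, and since the first $A$-syzygy of $\Hom_B(M,B)\otimes_B\Omega^{p-1}M$ is the \emph{proper} quotient of $\Hom_B(M,B)\otimes_B\Omega^{p}M$ by the nonzero module $D\,\Ext_B^p(M,M)$, chasing the resulting long exact sequence shows this $\Ext^1$ is nonzero. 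This non-vanishing in the critical degree $p$, where the comparison resolution ceases to be exact, is the delicate step of (2); together with M\"uller's formula it gives $\domdim A=1+\inf\{i\geq 1:\Ext_A^i(D(A),A)\neq 0\}$.
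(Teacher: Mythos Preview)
The paper does not prove this theorem at all: it simply cites \cite{FanKoe2}, Theorem~3.2 and Proposition~3.3. Your attempt is therefore a genuinely different route---a self-contained argument via Morita--Tachikawa and M\"uller's formula---and the overall strategy (reduce to the base algebra $B$ and compare $\Ext_B^\bullet(M,M)$ with $\Ext_A^\bullet(D(A),A)$) is the natural one. However, two steps are not adequately justified and, as written, are gaps rather than routine verifications.

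In part~(1) you need that $\mathsf{G}(B)\cong\mathsf{G}(D(B))$ forces $B\cong D(B)$, and you invoke ``full faithfulness of $\mathsf{G}$'' via the recollement at $e$. But the fully faithful recollement embedding $B^e\text{-}\mathrm{mod}\to A^e\text{-}\mathrm{mod}$ is $Ae\otimes_{eAe}(-)\otimes_{eAe}eA$, whereas your $\mathsf{G}$ uses $D(M)=D(eA)$ on the left, and $D(eA)\not\cong Ae$ as $A$--$B$-bimodules in general. Concretely, applying $e(-)e$ to $\mathsf{G}(X)$ yields $D(B)\otimes_B X$, not $X$; so from $\mathsf{G}(B)\cong\mathsf{G}(D(B))$ you only get $D(B)\cong D(B)\otimes_B D(B)$, and it is not clear how to conclude $B\cong D(B)$ from this without already knowing $B$ is self-injective. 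In part~(2) the delicate non-vanishing $\Ext_A^{p}(D(A),A)\neq 0$ is not established by your sketch. Your comparison resolution gives $\Omega_A^{p}(D(A))$ as a \emph{quotient} of $P\otimes_B\Omega^{p}M$, hence an \emph{injection} $\Hom_A(\Omega_A^{p}D(A),A)\hookrightarrow\Hom_B(\Omega^{p}M,M)$, and consequently only an injection $\Ext_A^{p}(D(A),A)\hookrightarrow\Ext_B^{p}(M,M)$; knowing the target is nonzero does not make the source nonzero. The long exact sequence you allude to, coming from $0\to D\Ext_B^{p}(M,M)\to P\otimes_B\Omega^{p}M\to\Omega_A^{p}D(A)\to 0$, feeds into $\Ext_A^{p+1}(D(A),A)$, not $\Ext_A^{p}(D(A),A)$. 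Both points are repairable (for~(1) one can instead compare $eVe$ with $eAe$ directly; for~(2) one can use that $B$ is self-injective to control the cokernel), but as written they are genuine gaps.
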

\begin{proof}
For (1), see theorem 3.2. of \cite{FanKoe2}. For (2) see proposition 3.3. of \cite{FanKoe2}. 

\end{proof}

For more on gendo-symmetric algebras we refer to \cite{FanKoe} and \cite{Mar2}.
$M$ is called an \emph{$n$-th syzygy module} in case $M \cong \Omega^n(N)$ for some other $A$-module $N$.
The \emph{$A$-dual} of an $A$-module $M$ is defined as $M^{*}:=\Hom_A(M,A)$, which is a right $A$-module. Since $(-)^{*}=\Hom_A(-,A)$ is a functor, we also have $A$-duals of $A$-linear maps.
For an $A$-module $M$, the \emph{evaluation map} $\ev_M: M \rightarrow M^{**}$ is defined by $\ev_M(m)(g)=g(m)$, when $g \in M^{*}$.
$M$ is called \emph{torsionless} in case $\ev_M$ is injective, which is equivalent to $M$ being a submodule of a projective module or equivalently a first syzygy module. $M$ is called \emph{reflexive} in case $\ev_M$ is an isomorphism, which is equivalent to $M \cong M^{**}$ as $A$-modules. Following Auslander and Bridger in \cite{AB} the \emph{Auslander-Bridger transpose} of a module $M$ with minimal projective presentation $P_1 \xrightarrow{f} P_0 \rightarrow M \rightarrow 0$ is defined as the cokernel of the map $P_0^{*} \xrightarrow{f^{*}} P_1^{*}$. 
The \emph{higher Auslander-Bridger tranpose} of a module $M$ is defined as $J_{n}(M):=\Tr(\Omega^{n}(M))$ for $n \geq 0$.
We call $M$ \emph{$n$-torsionfree} in case $\Ext_A^i(\Tr(M),A) =0$ for $i=1,...,n$. Being 1-torsionfree is the same as being torsionless and being 2-torsionfree is the same as being reflexive.
Following \cite{RZ}, for a module $M$ one defins $\mho(M)$ as the cokernel of a minimal left $\add(A)$-approximation of $M$.

The \emph{$\mho$-quiver} of a finite dimensional algebra $A$ is defined as the quiver having vertices the isomorphism classes of indecomposable non-projective $A$-modules $[X]$ and there is an arrow $[\mho(X)] \rightarrow [X]$ for any torsionless indecomposable non-projective module $X$.

We collect some results on $\mho$.
\begin{proposition} \label{ringelzhangpropo}
Let $A$ be a finite dimensional algebra.
\begin{enumerate}
\item $\mho^k \cong \Tr \Omega^k \Tr$ for $k \geq 1$.
\item In case $M$ is torsionless, $\mho(M)$ is indecomposable and not projective and $\Omega( \mho(M)) \cong M$.
\item $[M]$ is the start of a path of length $t \geq 1$ in the $\mho$-quiver if and only if $\Ext_A^i(M,A)=0$ for $i=1,...,t$.
\item $[M]$ is the end of a path of length $t \geq 1$ if and only if $M$ is $t$-torsionfree.

\end{enumerate}

\end{proposition}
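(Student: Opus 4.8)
The four statements are essentially Ringel--Zhang's, see \cite{RZ}; the plan is to prove (1) by an explicit computation with the $A$-duality $(-)^{*}=\Hom_A(-,A)$, deduce (2) directly from the construction of $\mho$, and then obtain (3) and (4) by combining (1) and (2) with elementary dimension shifting. For (1) I would reduce to the case $k=1$: granting that case and the isomorphism $\Tr\Tr\cong\operatorname{id}$ on $\underline{\mod}-A$, an induction gives $\mho^{k}M=\mho(\mho^{k-1}M)\cong\mho(\Tr\Omega^{k-1}\Tr M)\cong\Tr\Omega\Tr(\Tr\Omega^{k-1}\Tr M)\cong\Tr\Omega^{k}\Tr M$. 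For $k=1$, I would start from a minimal projective presentation $P_1\xrightarrow{p}P_0\xrightarrow{\phi}M\to 0$. Dualizing yields $\Omega\Tr M\cong\operatorname{im}(p^{*})$ fitting into an exact sequence $0\to M^{*}\xrightarrow{\phi^{*}}P_0^{*}\to\Omega\Tr M\to 0$; choosing a projective cover $\pi\colon Q\twoheadrightarrow M^{*}$ of the right module $M^{*}$ gives a projective presentation $Q\xrightarrow{\phi^{*}\pi}P_0^{*}\to\Omega\Tr M\to 0$, so that $\Tr\Omega\Tr M=\operatorname{coker}\big((\phi^{*}\pi)^{*}\colon P_0^{**}\to Q^{*}\big)=\operatorname{coker}(\pi^{*}\circ\phi^{**})$ in $\underline{\mod}-A$. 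Naturality of the evaluation map gives $\phi^{**}\circ\ev_{P_0}=\ev_M\circ\phi$; since $\ev_{P_0}$ is an isomorphism and $\phi$ is surjective, this last cokernel equals $\operatorname{coker}(\pi^{*}\circ\ev_M\colon M\to Q^{*})$. The final step is to recognise $\pi^{*}\circ\ev_M$ as the minimal left $\add(A)$-approximation of $M$: it is an approximation because any $h=(h_1,\dots,h_n)\colon M\to A^{n}$ has components $h_i\in M^{*}$ of the form $h_i=\pi(q_i)$ and then factors through $\pi^{*}\ev_M$ via $\psi\mapsto(\psi(q_1),\dots,\psi(q_n))$, and it is left-minimal because $\pi$ is a projective cover; hence its cokernel is $\mho M$ by definition, giving $\mho M\cong\Tr\Omega\Tr M$.

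For (2), given $M$ indecomposable, torsionless and non-projective, the minimal left $\add(A)$-approximation $f\colon M\to Q^{*}$ is a monomorphism, since $M$ embeds into some $A^{n}$ and that monomorphism factors through $f$; hence $0\to M\xrightarrow{f}Q^{*}\to\mho M\to 0$ is exact. From this I would read off: $\mho M$ is non-projective (else the sequence splits and $M$ is a projective summand of $Q^{*}$); $\Omega\mho M\cong M$ (because $M$ is indecomposable and non-projective, a projective cover of $\mho M$ splitting off from $Q^{*}$ cannot leave a projective summand in its kernel, so the kernel is $M$); and $\mho M$ is indecomposable (since $\Omega\mho M\cong M$ is indecomposable there is at most one non-projective indecomposable summand, while left-minimality of $f$ rules out a projective direct summand of $\mho M$).

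For (3) and (4), I would first note that paths in the $\mho$-quiver between two given vertices are unique: an arrow into $[X]$ has the unique source $[\mho X]$, and by (2) an arrow out of $[M]$ has the unique target $[\Omega M]$. Hence $[M]$ is the end of a path of length $t$ exactly when $M,\mho M,\dots,\mho^{t-1}M$ are all torsionless (their indecomposability and non-projectivity being automatic from (2)). By (1), $\mho^{j}M\cong\Tr\Omega^{j}\Tr M$, which is torsionless, i.e.\ $1$-torsionfree, iff $\Ext_A^{1}(\Tr(\mho^{j}M),A)=0$; since $\Tr(\mho^{j}M)\cong\Omega^{j}\Tr M$ in the stable category, this reads $\Ext_A^{1}(\Omega^{j}\Tr M,A)=\Ext_A^{j+1}(\Tr M,A)=0$ by dimension shifting. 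Therefore $[M]$ ends a path of length $t$ iff $\Ext_A^{i}(\Tr M,A)=0$ for $i=1,\dots,t$, i.e.\ iff $M$ is $t$-torsionfree, which is (4). For (3), the key point is that an arrow emanates from $[N]$ iff $\Ext_A^{1}(N,A)=0$: writing $N=\mho N'$ with $f'\colon N'\to Q^{*}$ the minimal left $\add(A)$-approximation, applying $(-)^{*}$ to $0\to N'\xrightarrow{f'}Q^{*}\to N\to 0$ and using that $(f')^{*}$ is surjective (the approximation property applied to the target $A$) gives $\Ext_A^{1}(N,A)=\operatorname{coker}((f')^{*})=0$; conversely, if $\Ext_A^{1}(N,A)=0$ and $N$ is indecomposable non-projective, then the syzygy inclusion $\Omega N\hookrightarrow P_0$ into the projective cover of $N$ is a minimal left $\add(A)$-approximation (its being an approximation is equivalent to $\Ext_A^{1}(N,A)=0$, and left-minimality follows from $N$ being indecomposable non-projective), so $\mho\Omega N\cong N$. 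Iterating and using dimension shifting---together with the fact that $\Ext_A^{j}(M,A)=0$ forces $\Omega^{j-1}M$ to be non-projective---one obtains that $[M]$ is the start of a path of length $t$ iff $\Ext_A^{i}(M,A)=0$ for $i=1,\dots,t$.

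The step I expect to be the main obstacle is part (1): one has to track carefully how minimal projective presentations and the evaluation maps transform under the $A$-duality, keep the unavoidable projective summands under control by working throughout in $\underline{\mod}-A$, and check that the particular map $\pi^{*}\circ\ev_M$ is exactly the minimal left $\add(A)$-approximation. Once (1) and (2) are in place, statements (3) and (4) become bookkeeping with $\Ext$, syzygies and the previously recorded facts.
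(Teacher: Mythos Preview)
The paper does not prove this proposition at all: each of the four items is disposed of with a one-line citation to the corresponding result in Ringel--Zhang \cite{RZ}. Your proposal, by contrast, supplies self-contained arguments, and these are essentially correct and follow the standard route (indeed, they are close in spirit to how Ringel and Zhang themselves argue). In that sense you have done strictly more than the paper does; the trade-off is length versus self-containment.

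One small imprecision worth flagging: in your treatment of (3) you write that ``$\Ext_A^{j}(M,A)=0$ forces $\Omega^{j-1}M$ to be non-projective''. Taken literally this is false (a module of projective dimension~$1$ has $\Ext_A^{2}(M,A)=0$ while $\Omega M$ is projective). What is true, and what your argument actually needs, is that if $M$ is indecomposable non-projective and $\Ext_A^{i}(M,A)=0$ for \emph{all} $i=1,\dots,t$, then $\Omega^{j}M$ is non-projective for every $j\le t$; this follows because a module of finite projective dimension $d$ with $\Ext_A^{i}(M,A)=0$ for $i=1,\dots,d$ must be projective (as $\Ext_A^{d}(M,-)$ is right exact and non-zero, hence non-zero on some projective, hence on $A$). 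With this adjustment your inductive construction of the path $[M]\to[\Omega M]\to\cdots\to[\Omega^{t}M]$ goes through. Everything else---the identification of $\pi^{*}\circ\ev_M$ with the minimal left $\add(A)$-approximation in (1), the splitting argument for $\Omega(\mho M)\cong M$ in (2), and the dimension shifting in (4)---is correct as written.
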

\begin{proof}
\begin{enumerate}
\item See the lemma in 4.4. of \cite{RZ}.
\item This is a consequence of lemma 3.3. in \cite{RZ}.
\item See (1) of the theorem in section 1.5. of \cite{RZ}.
\item See (2) of the theorem in section 1.5. of \cite{RZ}.
\end{enumerate}

\end{proof}

$\Dom_n(A)$ denotes the full subcategory of modules having dominant dimension at least $n$, $\TF_n(A)$ denotes the full subcategory of $n$-torsionfree modules and $\Omega^n(\mod-A)$ the full subcategory of $A$-modules that are $n$-th syzygy modules.

\begin{theorem} \label{FKYtheorem}
Let $A$ be a finite dimensional algebra.
\begin{enumerate}
\item $A$ has dominant dimension at least one if and only if there is an $A$-bimodule monomorphism $ A \rightarrow \Hom_A(D(A), \Hom_A(\Hom_A(D(A),A),A)$.
\item $A$ has dominant dimension at least two if and only if there is an $A$-bimodule isomorphism  $A \cong \Hom_A(D(A), \Hom_A(\Hom_A(D(A),A),A)$.

\end{enumerate}

\end{theorem}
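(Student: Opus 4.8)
I would prove both equivalences by transporting them, through the duality $D=\Hom_K(-,K)$ on $A$-bimodules, to the dominant dimension criterion of \cite{FKY}, once the iterated $\Hom$-bimodule on the right has been identified. Write $V=\Hom_A(D(A),A)$ and $V^{*}=\Hom_A(V,A)$, so that the bimodule in question is
\[
T:=\Hom_A(D(A),V^{*})=\Hom_A(D(A),\Hom_A(\Hom_A(D(A),A),A)).
\]
The first step is the bimodule isomorphism $D(T)\cong D(A)\otimes_A V\otimes_A D(A)$. For this, the tensor--hom adjunction applied to the $A$-bimodule $V$ gives $T=\Hom_A(D(A),\Hom_A(V,A))\cong\Hom_A(V\otimes_A D(A),A)$ as bimodules, and then the natural bimodule isomorphism $D\Hom_A(X,A)\cong D(A)\otimes_A X$ (valid for finitely generated $X$, both sides being right exact in $X$ and agreeing on free modules, with comparison $\overline{d}\otimes x\mapsto(g\mapsto\overline{d}(g(x)))$), applied to $X=V\otimes_A D(A)$, yields the claim. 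A routine check is needed here, namely that all the isomorphisms involved are $A^{e}$-linear; it reduces to tracking that every bimodule structure in sight comes from the two outer copies of $D(A)$ and the trailing $A$.

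For statement (2): since $D$ is a duality of $A$-bimodules, a bimodule isomorphism $A\cong T$ exists if and only if $D(A)\cong D(T)\cong D(A)\otimes_A V\otimes_A D(A)$ as bimodules, and by the theorem of \cite{FKY} recalled in the introduction this holds precisely when $\domdim A\ge 2$.

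For statement (1): applying $D$, a bimodule monomorphism $A\hookrightarrow T$ exists if and only if a bimodule epimorphism $D(A)\otimes_A V\otimes_A D(A)\twoheadrightarrow D(A)$ exists, so it remains to see that such an epimorphism exists exactly when $A$ is QF-3. For the implication ``$\domdim A\ge 1\Rightarrow$'' I would use the natural bimodule map $\mu\colon D(A)\otimes_A V\otimes_A D(A)\to D(A)$, $d\otimes f\otimes d'\mapsto f(d)\,d'$, whose cokernel is $D(A)/\tau D(A)$ with $\tau=\operatorname{tr}({}_AD(A),{}_AA)$ the trace of the injective cogenerator in ${}_AA$, and show that the QF-3 hypothesis — the existence of a minimal faithful projective-injective module $eA$ together with an embedding of ${}_AA$ into a projective-injective module — forces $\tau D(A)=D(A)$, so $\mu$ is onto. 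For the converse I would argue that the mere existence of a bimodule surjection onto the injective cogenerator $D(A)$ from $D(A)\otimes_A V\otimes_A D(A)$ (which, by the Nakayama functor, is isomorphic to $D(V^{*})\otimes_A D(A)$) already forces the injective envelope of ${}_AA$ to be projective, i.e.\ $A$ to be QF-3.

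The main obstacle is this last implication in (1): whereas in (2) an abstract bimodule isomorphism is simply dualised into the isomorphism of \cite{FKY}, in (1) one must recover the QF-3 property from the existence — not the naturality — of a surjection. I expect to deal with this by combining the cokernel/trace analysis above with a close inspection of the minimal injective copresentation of ${}_AA$ (equivalently, with the Morita--Tachikawa description of QF-3 algebras), or else by reusing the QF-3 part of the argument in \cite{FKY}.
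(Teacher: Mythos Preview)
The paper does not actually prove this theorem: its ``proof'' consists of the two citations \cite{FKY}, proposition~4.6 and theorem~4.7. So there is no argument in the paper to compare your proposal against; the result is taken as input from the literature.

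That said, your proposal is a genuine reduction rather than an independent proof. Your key identification $D(T)\cong D(A)\otimes_A V\otimes_A D(A)$, obtained from tensor--hom adjunction and the natural isomorphism $D\Hom_A(X,A)\cong D(A)\otimes_A X$, is correct, and for part~(2) it cleanly converts the statement into the other FKY criterion quoted in the introduction. Note, however, that this is close to circular: the $D(A)\otimes_A V\otimes_A D(A)$ characterisation and the $\Hom_A(D(A),V^{*})$ characterisation are both proved in \cite{FKY} and are $K$-dual to one another, so you are ultimately still invoking \cite{FKY}. (An alternative already present in the paper is Proposition~\ref{isomorphismspropo}(3), which identifies $T$ with the $A^{e}$-double dual $A^{**}$; the question then becomes whether $A\cong A^{**}$ as $A^{e}$-modules.)

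For part~(1) there is a real gap, which you correctly flag. The forward direction is fine: your map $\mu$ has image $\tau\cdot D(A)$ with $\tau=\operatorname{tr}_{D(A)}({}_AA)$, and QF-3 gives $\tau=A$. The converse, however, needs more: from an \emph{arbitrary} bimodule surjection $D(A)\otimes_A V\otimes_A D(A)\twoheadrightarrow D(A)$ you must extract the QF-3 property, and nothing in your sketch does this without re-importing the argument from \cite{FKY}. Your final sentence essentially concedes this. If you want an internal route, the identification $T\cong\Hom_{A^{e}}(\Hom_{A^{e}}(A,A^{e}),A^{e})$ from Proposition~\ref{isomorphismspropo}(3) reduces the existence of a bimodule monomorphism $A\hookrightarrow T$ to $A$ being torsionless over $A^{e}$; but turning torsionless-over-$A^{e}$ into QF-3 (and back) is exactly the content the paper obtains \emph{from} Theorem~\ref{FKYtheorem} in Proposition~\ref{propodomdim1}, so you cannot use that direction here without circularity.
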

\begin{proof}
\begin{enumerate}
\item See \cite{FKY}, proposition 4.6.
\item See \cite{FKY}, theorem 4.7.

\end{enumerate}

\end{proof}

We will need the following two general results:
\begin{proposition} \label{torformula}
Let $A$ be a finite dimensional algebra with two $A$-modules $M,N$.
\begin{enumerate}
\item $\Tor_i^A(M,N) \cong D \Ext_A^i(M,D(N))$ for all $i \geq 0$.
\item In case $P$ is a projective $A$-bimodule, then $P \otimes_A M$ is a projective $A$-module.

\end{enumerate}
\end{proposition}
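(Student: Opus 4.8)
The plan is to deduce both parts from standard homological algebra; there is no essential obstacle, and the only point that genuinely requires care is the bookkeeping of left versus right $A$-actions in part (1), together with the observation that every isomorphism used there is natural — so that the bimodule form of (1), which the later sections actually use, follows automatically.

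For part (1), the essential input is that $D=\Hom_K(-,K)$ is exact (as $K$ is a field), hence takes a chain complex to a cochain complex and carries the homology $H_i$ of a complex to the cohomology $H^i$ of its $D$-dual, and moreover $DD\cong\id$ on finitely generated modules. The case $i=0$ is precisely the tensor--hom adjunction $D(N\otimes_A M)\cong\Hom_A(M,D(N))$: a balanced bilinear map $N\times M\to K$ is the same datum as an $A$-linear map $M\to D(N)$. For general $i$ I would fix a projective resolution $Q_\bullet\to M$, so that $\Tor_i^A(M,N)=H_i(N\otimes_A Q_\bullet)$; applying $D$ and the degreewise $i=0$ adjunction gives $D\,\Tor_i^A(M,N)\cong H^i\big(\Hom_A(Q_\bullet,D(N))\big)=\Ext_A^i(M,D(N))$, and applying $D$ once more, together with $DD\cong\id$, yields $\Tor_i^A(M,N)\cong D\,\Ext_A^i(M,D(N))$. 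Naturality of the adjunction and of biduality upgrades this to an isomorphism of $A$-bimodules whenever $M$ and $N$ are bimodules.

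For part (2), I would write the projective $A^e$-module $P$ as a direct summand of a free $A^e$-module $F$. Since $F$ is a direct sum of copies of the regular bimodule $A\otimes_K A$ --- on which $A$ acts by left multiplication on the first factor and by right multiplication on the second --- and $-\otimes_A M$ is an additive functor, it suffices to compute $(A\otimes_K A)\otimes_A M\cong A\otimes_K(A\otimes_A M)\cong A\otimes_K M$ as left $A$-modules, the left action living on the first factor. As $M$ is finite dimensional, $A\otimes_K M\cong A^{\dim_K M}$ is a free left $A$-module; hence $F\otimes_A M$ is free and its direct summand $P\otimes_A M$ is projective.
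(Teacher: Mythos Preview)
Your argument is correct and is precisely the standard proof of these facts; the paper itself does not give an argument but simply cites \cite[Appendix~A, Prop.~4.11]{ASS} for (1) and \cite[Ch.~IV, Lemma~11.15]{SkoYam} for (2), whose proofs are essentially the ones you have written out. Your remark that the adjunction in (1) is natural, so that the isomorphism upgrades to one of bimodules, is a useful addition, since that is how the proposition is later applied over $A^e$.
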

\begin{proof}
\begin{enumerate}
\item See for example proposition 4.11. in appendix A of \cite{ASS}.
\item See for example lemma 11.15. in chapter IV. in \cite{SkoYam}.

\end{enumerate}

\end{proof}
\begin{theorem} \label{ausreisyzygy}
Let $A$ be an algebra of dominant dimension at least $n \geq 1$.
Then for $1 \leq i \leq n$: $\Dom_i(A)=\TF_i(A)=\Omega^i(\mod-A)$ and $\TF_{n+1}(A)=\Omega^{n+1}(\mod-A)$.

\end{theorem}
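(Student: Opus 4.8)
Every projective module lies in $\Dom_i(A)$ (since $\domdim A\geq n\geq i$), is trivially $i$-torsionfree, and embeds into an arbitrarily long exact complex of free modules, so all three classes contain $\add(A)$ and it suffices to compare them on modules without projective summands. The assertion is then equivalent to the circular chain
\[
\Omega^i(\mod-A)\ \subseteq\ \Dom_i(A)\ \subseteq\ \TF_i(A)\ \subseteq\ \Omega^i(\mod-A)\qquad(1\leq i\leq n)
\]
together with the single inclusion $\Omega^{n+1}(\mod-A)\subseteq\TF_{n+1}(A)$; the reverse inclusion $\TF_{n+1}(A)\subseteq\Omega^{n+1}(\mod-A)$ is the case $i=n+1$ of the third inclusion above, which holds for every algebra. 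For that last fact one may quote Auslander--Bridger (an $i$-torsionfree module is an $i$-th syzygy), or argue with the $\mho$-quiver: by Proposition \ref{ringelzhangpropo}(4) an $i$-torsionfree $M$ terminates a path $[\mho^i(M)]\to\cdots\to[\mho(M)]\to[M]$, and iterating $\Omega(\mho(-))\cong\mathrm{id}$ (Proposition \ref{ringelzhangpropo}(2)) along it gives $M\cong\Omega^i(\mho^i(M))$.

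For $\Omega^i(\mod-A)\subseteq\Dom_i(A)$ with $i\leq n$ I would induct on $i$, the base case $i=1$ being immediate (a torsionless module embeds into a free module, hence, as $\domdim A\geq1$, into a projective--injective one). The induction rests on two elementary lemmas: (a) $\Dom_k(A)$ is closed under extensions --- feed the minimal injective coresolutions of the outer terms into the Horseshoe Lemma and use that the minimal injective coresolution of the middle term is a direct summand of the resulting (possibly non-minimal) one, hence projective--injective in the same degrees; and (b) if $0\to X\to Y\to Z\to 0$ is exact with $\domdim Y\geq k\geq1$ and $\domdim Z\geq k-1$, then $\domdim X\geq k$ --- embed $X$ into the injective envelope $Q$ of $Y$ (projective--injective since $\domdim Y\geq1$), note that $Q/X$ sits in $0\to Z\to Q/X\to\Omega^{-1}Y\to0$ so $\domdim(Q/X)\geq k-1$ by (a), and splice a length-$(k-1)$ projective--injective coresolution of $Q/X$ onto $0\to X\to Q\to Q/X\to0$. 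For the inductive step, write $M\cong\Omega^i(N)=\Omega(L)$ with $L=\Omega^{i-1}(N)$; then $\domdim L\geq i-1$ by the inductive hypothesis and $\domdim P\geq n\geq i$ for the projective cover $P$ of $L$, so (b) applied to $0\to M\to P\to L\to0$ yields $\domdim M\geq i$.

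The inclusion $\Dom_i(A)\subseteq\TF_i(A)$ for $i\leq n$ is the crux, and the step I expect to be the main obstacle. The plan is to use ``$M\in\TF_i(A)\iff\Ext_A^j(\Tr M,A)=0$ for $1\leq j\leq i$'', rewriting this via $\Ext_A^j(\Tr M,A)\cong D\Tor_j^A(\Tr M,D(A))$ (Proposition \ref{torformula}(1)) as a $\Tor$-vanishing statement. Given $\domdim M\geq i$, the initial segment $0\to M\to I^0\to\cdots\to I^{i-1}$ of the minimal injective coresolution has all $I^j$ projective--injective, hence in $\add(Ae)$ where $Ae$ is the minimal faithful projective--injective module; the structural input that should make the argument go is $\Hom_A(Ae,A)\cong eA$, so that the $A$-duals $(I^j)^{*}$ lie in $\add(eA)$ and, as $A=eA\oplus(1-e)A$ as a right module, are projective as right \emph{and} left $A$-modules, whence $\Ext^1_A((I^j)^{*},A)=0$ and $\Tor_{>0}^A((I^j)^{*},D(A))=0$. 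Running the induction on $i$ through the short exact sequences $0\to M\to I^0\to\Omega^{-1}M\to0$ --- using $\Omega^{-1}M\in\TF_{i-1}(A)$ (inductive hypothesis, since $\domdim\Omega^{-1}M\geq i-1$) and these vanishings to control the connecting homomorphisms --- should produce the required vanishing for $M$. The genuinely delicate case is $i=2$, i.e.\ that $\domdim M\geq2$ forces $M$ reflexive when $\domdim A\geq2$; for this I would invoke Theorem \ref{FKYtheorem} together with the Morita--Tachikawa identifications $M\cong\Hom_{eAe}(eA,eM)$ and $M^{*}\cong\Hom_{eAe}(eM,eA)$ (valid because $A\cong\Hom_{eAe}(eA,eA)$ and $\Hom_{eAe}(eA,-)$ is fully faithful), or simply cite reflexivity of dominant-dimension-$\geq2$ modules over a $2$-domdim algebra as known.

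Finally, $\Omega^{n+1}(\mod-A)\subseteq\TF_{n+1}(A)$ follows by combining the chain above with one syzygy step: writing $M\cong\Omega(L)$ with $L:=\Omega^n(N)\in\Omega^n(\mod-A)=\TF_n(A)=\Dom_n(A)$, the relation $\Tr\Omega L\cong\mho\Tr L$ (Proposition \ref{ringelzhangpropo}(1)) and the $\Ext_A(-,A)$-dimension shifts attached to $\mho$ (available once the relevant modules are torsionless, which is forced by Lemma (b) promoting the syzygies of $L$ to high dominant dimension and hence, via the already-proved $\Dom_2(A)\subseteq\TF_2(A)$, to reflexivity) convert the $n$-torsionfreeness of $L$ into the $(n+1)$-torsionfreeness of $M$; the case $n=1$ is exactly the classical statement that second syzygies over a QF-3 algebra are reflexive. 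Assembling all the inclusions then proves the theorem. The only step where real care is needed is the third one, and within it the passage $i=2$ (reflexivity) and the bookkeeping that upgrades the inductive hypothesis through the dualized coresolution.
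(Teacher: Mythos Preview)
Your approach is genuinely different from the paper's. The paper does not attempt a self-contained argument at all: it simply observes that $\domdim A\geq n$ forces the terms $I_0,\dots,I_{n-1}$ of the minimal injective coresolution of $A$ to be projective (hence of flat dimension $0\leq i+1$), invokes Theorem~0.1 of \cite{AusRei} to conclude that $\Omega^i(\mod-A)$ is extension-closed for $i=1,\dots,n$, and then quotes Proposition~1.6(b) of \cite{AusRei} to get $\Omega^i(\mod-A)=\TF_i(A)$ for $i=1,\dots,n+1$ in one stroke. The identification $\Omega^i(\mod-A)=\Dom_i(A)$ for $i\leq n$ is then cited from \cite{MarVil}. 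So the paper's route is: extension-closedness of syzygy categories $\Rightarrow$ Auslander--Reiten's general theorem, with no inductive bookkeeping at all.

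Your inclusions $\TF_i\subseteq\Omega^i$ and $\Omega^i\subseteq\Dom_i$ (via your Lemmas (a) and (b)) are fine. The genuine gap is exactly where you flag it: the step $\Dom_i\subseteq\TF_i$. Your sketch does not connect the objects it manipulates to what has to vanish. You pass from $\Ext^j(\Tr M,A)$ to $\Tor_j(\Tr M,D(A))$ and then start dualising the injective coresolution of $M$, producing projective right modules $(I^j)^*\in\add(eA)$; but $\Tr M$ is built from a projective \emph{presentation} of $M$, not from the $I^j$, and nothing in your outline explains how the $\Tor$-acyclicity of $(I^j)^*$ feeds into $\Tor_j(\Tr M,D(A))$. (The phrase ``projective as right and left $A$-modules'' for the one-sided module $(I^j)^*$ is also not meaningful as stated.) The inductive step you describe would need a clean relation between $\Tr M$ and $\Tr(\Omega^{-1}M)$, and that relation involves $\Ext^1(\Omega^{-1}M,A)$ or, equivalently, torsionlessness of $\Tr(\Omega^{-1}M)$, which you have not established. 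The same issue resurfaces in your final paragraph: to run the $\mho$-shift on $\Tr L$ you need $\Tr L$ torsionless, i.e.\ $\Ext^1(L,A)=0$; your justification (``reflexivity of $L$'') gives $\Ext^{1,2}(\Tr L,A)=0$, which is a different condition. The paper's citation of the Auslander--Reiten extension-closedness machinery is precisely what circumvents this delicate point, handling $\TF_i=\Omega^i$ for all $i\leq n+1$ uniformly.
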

\begin{proof}
By proposition 1.6. (b) of \cite{AusRei} we have that $\Omega^i(\mod-A)=\TF_i(A)$ for all $i=1,..,n+1$ in case the subcategory $\Omega^i(\mod-A)$ is extension-closed for $i=1,...,n$.
By theorem 0.1. of \cite{AusRei} $\Omega^i(\mod-A)$ is extension-closed for $i=1,...,n$ if and only if the flat dimensions of the modules $I_i$ are less than or equal to $i+1$ for $i<n$. Since $A$ having dominant dimension $n$ implies that $\pd(I_i)=0$ for $i<n$ and thus also that their flat dimensions (which coincides with the projective dimensions for finitely generated modules over finite dimensional algebras) are less than or equal to $i+1$ and thus 
for $1 \leq i \leq n+1$: $\Omega^i(\mod-A)=\TF_i(A)$. Now we have also $\Omega^i(\mod-A)=\Dom_i(A)$ for $1 \leq i \leq n$ by \cite{MarVil}, proposition 4.
\end{proof}

\begin{proposition} \label{domdimtorsyzprop}
Let $A$ be a finite dimensional algebra and $M$ an $A$-module.
Consider the following conditions:
\begin{enumerate}
\item $A$ has dominant dimension at least $n$ and $M$ has dominant dimension at least $n$.
\item $M$ is $n$-torsionfree.
\item $M$ is an $n$-th syzygy module.

\end{enumerate}
Then we have that (1) implies (2) and (2) implies (3).
\end{proposition}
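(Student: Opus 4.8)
The plan is to deduce (1)$\Rightarrow$(2) immediately from Theorem~\ref{ausreisyzygy}, and to obtain (2)$\Rightarrow$(3) from the fact — valid for \emph{every} finite dimensional algebra — that an $n$-torsionfree module is an $n$-th syzygy module, which can be read off from Proposition~\ref{ringelzhangpropo}. Concretely, for (1)$\Rightarrow$(2): since $A$ has dominant dimension at least $n$, Theorem~\ref{ausreisyzygy} applies to $A$ and gives $\Dom_n(A)=\TF_n(A)$; as $M$ has dominant dimension at least $n$, i.e. $M\in\Dom_n(A)$, it follows that $M\in\TF_n(A)$, that is, $M$ is $n$-torsionfree. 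It is precisely the need to invoke Theorem~\ref{ausreisyzygy} that forces the hypothesis $\domdim(A)\geq n$ into condition (1); the assumption on $\domdim(M)$ by itself does not suffice.

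For (2)$\Rightarrow$(3): since $\Tr$ and $\Omega$ commute with finite direct sums and annihilate projective summands, since a module is $n$-torsionfree iff each of its indecomposable non-projective summands is, and since a finite direct sum of $n$-th syzygy modules is again one (projective modules being $n$-th syzygy modules for trivial reasons), I may assume $M$ is indecomposable and non-projective. An $n$-torsionfree module is in particular $1$-torsionfree, hence torsionless, so by Proposition~\ref{ringelzhangpropo}(4) the vertex $[M]$ is the endpoint of a path $[Z_0]\to[Z_1]\to\cdots\to[Z_n]=[M]$ of length $n$ in the $\mho$-quiver of $A$. By the definition of the $\mho$-quiver, each arrow $[Z_{i-1}]\to[Z_i]$ forces $Z_i$ to be torsionless with $Z_{i-1}\cong\mho(Z_i)$, and then Proposition~\ref{ringelzhangpropo}(2) gives $\Omega(Z_{i-1})\cong Z_i$. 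Iterating this from $i=n$ down to $i=1$ yields $M=Z_n\cong\Omega(Z_{n-1})\cong\Omega^2(Z_{n-2})\cong\cdots\cong\Omega^n(Z_0)$, so $M$ is an $n$-th syzygy module.

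As an aside, (2)$\Rightarrow$(3) also admits a direct inductive proof not using the $\mho$-quiver: for $n\geq 2$ an $n$-torsionfree module $M$ is reflexive, so dualising a minimal projective presentation of $M^{*}$ exhibits $M\cong M^{**}\cong\Omega^2(\Tr(M^{*}))$, while $M^{*}\cong\Omega^2(\Tr(M))$ together with a dimension shift gives $\Ext_A^i(M^{*},A)\cong\Ext_A^{i+2}(\Tr(M),A)=0$ for $1\leq i\leq n-2$, so that $\Tr(M^{*})$ is $(n-2)$-torsionfree and hence, by induction, an $(n-2)$-th syzygy module. Either way the argument is essentially bookkeeping once Theorem~\ref{ausreisyzygy} and Proposition~\ref{ringelzhangpropo} are in place; the only points requiring a little care are the reduction in (2)$\Rightarrow$(3) to the indecomposable non-projective case, and the observation that the inclusion $\TF_n(A)\subseteq\Omega^n(\mod-A)$ holds with no hypothesis on $\domdim(A)$ — in contrast to the reverse inclusion, which is the delicate part of Theorem~\ref{ausreisyzygy}.
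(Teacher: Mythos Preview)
Your argument is correct. For (1)$\Rightarrow$(2) you do exactly what the paper does, namely invoke Theorem~\ref{ausreisyzygy}. For (2)$\Rightarrow$(3) the paper simply cites the classical fact from Auslander--Bridger \cite{AB} that $n$-torsionfree modules are $n$-th syzygy modules, whereas you supply an explicit proof internal to the paper via the $\mho$-quiver and Proposition~\ref{ringelzhangpropo}. Your route has the advantage of being self-contained given the results already quoted, and your reduction to the indecomposable non-projective case together with the chain $\Omega(Z_{i-1})\cong Z_i$ from Proposition~\ref{ringelzhangpropo}(2) is clean and correct; the paper's approach has the advantage of brevity and of pointing to the original source, where the argument is carried out in greater generality (over noetherian rings). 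Your aside sketching the direct inductive argument is essentially the Auslander--Bridger proof the paper is citing, so both routes ultimately rest on the same circle of ideas.
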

\begin{proof}
That (1) implies (2) follows directly by \ref{ausreisyzygy} and that (2) implies (3) holds in general, see for example at the end of page 7 in \cite{AB}. 

\end{proof}
We remark that in general we do not have that (3) implies (2) or that (2) implies that $M$ has dominant dimension at least $n$ since in general $A$ might not even have projective-injective non-zero modules.
We give a quick example of a module that is a 2-syzygy module but not reflexive (=2-torsionfree):
\begin{example}
Let $A=K[x,y]/(x^2,y^2,xy)$, then $A$ is a 3-dimensional local algebra with simple module $S$.
Then $U:= \Omega^2(S)$ is a 4-dimensional module that is an $2$-th syzygy module.
But $\Hom_A(\Hom_A(U,A),A)$ is 16-dimensional and thus $U$ can not be reflexive.

\end{example}

\section{A new characterisation of the dominant dimension of algebras}
In this section we prove our main result using induction. We first prove the result for the small cases $n=1$ and $n=2$ separately.

\begin{proposition} \label{isomorphismspropo}
Let $A$ be a finite dimensional algebra.
We have the following isomorphisms of $A$-bimodules for an $A$-bimodule $X$:
\begin{enumerate}
\item $A^e \cong \Hom_K(D(A),A)$.
\item $\Hom_{A^e}(X, A^e) \cong \Hom_A(D(A) \otimes_A X, A)$,. In particular for $X=A: V=\Hom_A(D(A),A) \cong \Hom_{A^e}(A,A^e)=A^{*}$, the $A^e$-dual of $A$.
\item $\Hom_{A^e}(\Hom_{A^e}(A,A^e),A^e) \cong \Hom_A(D(A),\Hom_A(\Hom_A(D(A),A),A))$.
\end{enumerate}

\end{proposition}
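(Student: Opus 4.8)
The plan is to prove the three isomorphisms in turn, getting (2) out of (1) and (3) out of (2), the engine in each case being a Hom--tensor adjunction; the only delicate point is checking that every map constructed is a morphism of $A$-bimodules (equivalently of $A^e$-modules), not merely of $K$-vector spaces.

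For (1), the underlying linear isomorphism is finite-dimensional $K$-duality: $\Hom_K(D(A),A)\cong\Hom_K(D(A),K)\otimes_K A= DD(A)\otimes_K A\cong A\otimes_K A=A^e$. Concretely I would write down the map $A^e=A\otimes_K A^{op}\to\Hom_K(D(A),A)$ sending $a\otimes b$ to $(\chi\mapsto\chi(b)\,a)$, check it is bijective (injectivity by evaluating on a dual basis, then a dimension count), and then verify that it transports the regular $A^e$-module structure on $A^e$ to the natural $A$-bimodule structure carried by $\Hom_K(D(A),A)$: the left $A$-action post-composes with left multiplication on the target copy of $A$, and the right $A$-action pre-composes with an $A$-action on $D(A)$. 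This is the one genuinely computational step, and it is short.

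For (2), I would substitute (1) and then apply the currying isomorphism
$$\Hom_{A^e}\bigl(X,\Hom_K(D(A),A)\bigr)\;\cong\;\Hom_A\bigl(D(A)\otimes_A X,\,A\bigr),$$
which sends $\phi$ to $\chi\otimes x\mapsto\phi(x)(\chi)$ and is natural in $X$, hence an isomorphism of $A$-bimodules. Specialising $X=A$ and using $D(A)\otimes_A A\cong D(A)$ gives the stated identification $V=\Hom_A(D(A),A)\cong\Hom_{A^e}(A,A^e)=A^{*}$. For (3) I would apply (2) to the $A$-bimodule $X=A^{*}=\Hom_{A^e}(A,A^e)$, obtaining
$$\Hom_{A^e}\bigl(\Hom_{A^e}(A,A^e),A^e\bigr)\;\cong\;\Hom_A\bigl(D(A)\otimes_A\Hom_{A^e}(A,A^e),\,A\bigr),$$
replace $\Hom_{A^e}(A,A^e)$ by $\Hom_A(D(A),A)$ using the special case of (2) just proved, and then invoke the tensor--Hom adjunction once more to rewrite $\Hom_A\bigl(D(A)\otimes_A\Hom_A(D(A),A),A\bigr)$ as the iterated Hom $\Hom_A\bigl(D(A),\Hom_A(\Hom_A(D(A),A),A)\bigr)$.

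The main obstacle is not the existence of any of these isomorphisms — each is a case of finite-dimensional $K$-duality or of the standard tensor--Hom adjunction — but the sidedness bookkeeping: because the ambient algebra is $A^e=A\otimes_K A^{op}$, one must keep straight, at each stage, which of the two $A$-actions on $D(A)$ and on $A$ is being used for $\Hom$-linearity and which survives as part of the resulting bimodule structure, and in particular handle the right $A^e$-module structure on $\Hom_{A^e}(-,A^e)$ correctly so that the final adjunction in (3) uncurries with $D(A)$ on the outside, matching Theorem~\ref{FKYtheorem}, rather than on the inside. I expect the tidiest organisation is to fix the $A^e$-module / bimodule conventions explicitly at the outset and then let naturality of the adjunctions carry the bimodule structure through automatically.
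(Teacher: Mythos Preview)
Your proposal is correct and, for part (3), follows exactly the paper's argument: apply (2) with $X=A$ to identify $\Hom_{A^e}(A,A^e)$ with $\Hom_A(D(A),A)$, apply (2) again with $X=\Hom_A(D(A),A)$, and finish with the tensor--Hom adjunction. For parts (1) and (2) the paper simply cites Auslander--Reiten \cite{AusRei}, whereas you supply the actual argument (the $K$-duality identification and the currying isomorphism), which is the natural thing to do and amounts to unpacking that citation; your caution about sidedness bookkeeping is well placed but there is no gap.
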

\begin{proof}
\begin{enumerate}
\item See corollary 4.4. of \cite{AusRei}
\item See corollary 4.2. of \cite{AusRei}.
\item By (2), we get $\Hom_{A^e}(A, A^e) \cong \Hom_A(D(A), A)$
and thus $\Hom_{A^e}(\Hom_{A^e}(A,A^e),A^e) \cong \Hom_{A^e}(\Hom_A(D(A),A),A^e)$. Now setting $X=\Hom_A(D(A),A)$ in (2), we get $ \Hom_{A^e}(\Hom_A(D(A),A),A^e) \cong \Hom_A(D(A) \otimes_A \Hom_A(D(A),A) , A)$.
Then the adjoint isomorphism between Hom and the tensor product gives us
$\Hom_A(D(A) \otimes_A \Hom_A(D(A),A) , A) \cong \Hom_A(D(A),\Hom_A(\Hom_A(D(A),A),A))$.

\end{enumerate}

\end{proof}

\begin{proposition} \label{propodomdim1}
Let $A$ be a finite dimensional algebra.
Then the following are equivalent:
\begin{enumerate}
\item $A$ is a QF-3 algebra.
\item $A$ as a bimodule is torsionless.
\item $A$ has a bimodule is a syzygy of another bimodule.

\end{enumerate}

\end{proposition}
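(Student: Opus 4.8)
The plan is to derive the equivalence of (1) and (2) from the characterisation of QF-3 algebras in Theorem \ref{FKYtheorem}(1), after rewriting its statement in terms of the $A^e$-dual via Proposition \ref{isomorphismspropo}, and to obtain the equivalence of (2) and (3) directly from the general fact, recorded in Section 1, that a module is torsionless exactly when it is a first syzygy module.

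For the equivalence of (1) and (2), first note that by Proposition \ref{isomorphismspropo}(3) there is an isomorphism of $A$-bimodules
\[
\Hom_A(D(A),\Hom_A(\Hom_A(D(A),A),A)) \;\cong\; \Hom_{A^e}(\Hom_{A^e}(A,A^e),A^e)=A^{**},
\]
the double $A^e$-dual of the regular bimodule $A$. Hence Theorem \ref{FKYtheorem}(1) says precisely that $A$ is QF-3 if and only if there exists an $A^e$-module monomorphism $A \hookrightarrow A^{**}$. It then remains to check that such a monomorphism exists if and only if the evaluation map $\ev_A \colon A \to A^{**}$ is itself injective, i.e. $A$ is torsionless as a bimodule. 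One implication is immediate. For the other, observe that $A^{**}$ is the $A^e$-dual of $A^{*}=\Hom_{A^e}(A,A^e)$, and that any such dual module over a finite dimensional algebra is torsionless: applying $\Hom_{A^e}(-,A^e)$ to a surjection from a finitely generated free $A^e$-module onto $A^{*}$ embeds $A^{**}$ into a finitely generated free $A^e$-module. Since a submodule of a torsionless module is torsionless, any monomorphism $A \hookrightarrow A^{**}$ forces $A$ to be torsionless. One of the two implications can also be seen without Theorem \ref{FKYtheorem}(1): if $\domdim A \geq 1$ then, this number also being the dominant dimension of the bimodule $A$ over $A^e$ by \cite{Mue}, the bimodule $A$ embeds into a projective-injective $A^e$-module and is in particular torsionless.

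For the equivalence of (2) and (3) I would simply apply to the $A^e$-module $A$ the chain of equivalences ``torsionless $\Leftrightarrow$ submodule of a projective $\Leftrightarrow$ first syzygy module'' recalled in Section 1; here ``$A$ as a bimodule is a syzygy of another bimodule'' means exactly $A \cong \Omega(N)$ for some $A^e$-module $N$. I do not anticipate a genuine obstacle: the only non-formal ingredient is Proposition \ref{isomorphismspropo}(3), and once it is in place the rest is bookkeeping, the one delicate point being the passage from the abstract monomorphism produced by Theorem \ref{FKYtheorem}(1) to the evaluation map — which is exactly what torsionlessness of the dual module $A^{**}$ supplies.
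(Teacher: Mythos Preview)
Your proof is correct and follows essentially the same route as the paper: both reduce (2)$\Leftrightarrow$(3) to the general fact that torsionless equals first syzygy, and both obtain (2)$\Rightarrow$(1) by composing the injective evaluation map $\ev_A$ with the bimodule isomorphism of Proposition \ref{isomorphismspropo}(3) and invoking Theorem \ref{FKYtheorem}(1). The only minor difference is in the direction (1)$\Rightarrow$(2): the paper simply cites Proposition \ref{domdimtorsyzprop} (equivalently, your second argument via \cite{Mue}), whereas your primary argument goes through the observation that the abstract monomorphism $A\hookrightarrow A^{**}$ supplied by Theorem \ref{FKYtheorem}(1) already forces $A$ to be torsionless because $A^{**}$, being an $A^e$-dual, is itself torsionless --- a small extra step that the paper avoids but which is perfectly valid.
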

\begin{proof}
That (1) implies (2) and (2) implies (3) is clear by \ref{domdimtorsyzprop}.
Now assume that $A$ is a syzygy module, which is equivalent to $A$ being torsionless (recall that being torsionless is equivalent to being a syzygy module for a general module).
Then the evaluation map $\ev_A: A \rightarrow \Hom_{A^e}(\Hom_{A^e}(A,A^e),A^e)$ is a monomorphism, but by 
\ref{isomorphismspropo} (3), we have an $A$-bimodule isomorphism $g: \Hom_{A^e}(\Hom_{A^e}(A,A^e),A^e) \cong \Hom_A(D(A),\Hom_A(\Hom_A(D(A),A),A))$.
Thus the composition of maps $ g \circ \ev_A :A \rightarrow \Hom_A(D(A),\Hom_A(\Hom_A(D(A),A),A))$ is an $A$-bimodule monomorphism, which implies that $A$ has dominant dimension at least one by \ref{FKYtheorem} (1).
\end{proof}

\begin{theorem} \label{reflexivetheo}
Let $A$ be a finite dimensional algebra. Then the following are equivalent:
\begin{enumerate}
\item $A$ has dominant dimension at least two.
\item $A$ as a bimodule is reflexive.
\item $A$ is a 2-syzygy module as a bimodule, that is $A \cong \Omega^2(N)$ for some $A$-bimodule $N$.
\item $A \cong \Omega^2(\Tr(V))$ as $A$-bimodules.

\end{enumerate}

\end{theorem}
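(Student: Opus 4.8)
The plan is to prove the cyclic chain of implications $(1)\Rightarrow(4)\Rightarrow(3)\Rightarrow(2)\Rightarrow(1)$, using the already-established QF-3 case (Proposition \ref{propodomdim1}), the identification $V\cong A^{*}$ from Proposition \ref{isomorphismspropo}(2), and the Auslander--Bridger/Auslander--Reiten machinery collected in Theorem \ref{ausreisyzygy} and Proposition \ref{domdimtorsyzprop}. Since the dominant dimension of $A$ as a bimodule over $A^e$ agrees with $\domdim(A)$ (by \cite{Mue}, recalled in the Preliminaries), everything can be carried out inside $\mod\text{-}A^e$; the point is simply to translate the statements about bimodules back to statements about the canonical bimodule $V$ via the isomorphisms in Proposition \ref{isomorphismspropo}.

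For $(1)\Rightarrow(4)$: if $\domdim(A)\ge 2$, then $A$ as an $A^e$-module has dominant dimension at least $2$, hence by Theorem \ref{ausreisyzygy} (applied to the algebra $A^e$, whose dominant dimension is $\ge 2$) we get $A\in\TF_2(A^e)$, i.e. $A$ is reflexive as a bimodule, so $\ev_A\colon A\to A^{**}$ is an isomorphism. Now the standard exact sequence relating a module to the transpose of its dual — namely $0\to \Ext^1_{A^e}(\Tr(A^{*}),A^e)\to A \xrightarrow{\ev_A} A^{**}\to \Ext^2_{A^e}(\Tr(A^{*}),A^e)\to 0$ together with the fact that $\Tr\Tr$ is the identity up to projectives — lets one write $A\cong\Omega^{2}(\Tr(A^{*}))$ in $\underline{\mod}\text{-}A^e$. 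Using $A^{*}\cong V$ (Proposition \ref{isomorphismspropo}(2)) this is exactly $A\cong\Omega^{2}(\Tr(V))$. One must be slightly careful that the isomorphism is genuinely one of bimodules and not merely stable: since $A$ has no nonzero projective $A^e$-summand unless $A^e$ is semisimple (a connected non-semisimple algebra is not projective over its enveloping algebra), one can take $\Omega^2$ to be the syzygy in the minimal projective resolution and the isomorphism is honest. The implication $(4)\Rightarrow(3)$ is immediate since $\Tr(V)$ is some bimodule $N$.

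For $(3)\Rightarrow(2)$: this is the substantive step and I expect it to be the main obstacle. A priori being a second syzygy is weaker than being reflexive (the Example with $K[x,y]/(x^2,y^2,xy)$ in the excerpt shows this for general modules). The way around it is that $A$ is not an arbitrary bimodule: once we know $A$ is a second syzygy bimodule, in particular $A$ is torsionless, so by Proposition \ref{propodomdim1} the algebra $A$ is QF-3, i.e. $\domdim(A)\ge 1$. Then Theorem \ref{ausreisyzygy}, now with $n=1$, gives $\TF_{2}(A^e)=\Omega^{2}(\mod\text{-}A^e)$, so every second syzygy bimodule — in particular $A$ — is $2$-torsionfree, which is exactly reflexivity. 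Finally $(2)\Rightarrow(1)$: if $A$ is reflexive as a bimodule then $\ev_A\colon A\to \Hom_{A^e}(\Hom_{A^e}(A,A^e),A^e)$ is an isomorphism, and composing with the bimodule isomorphism $g$ of Proposition \ref{isomorphismspropo}(3) yields an $A$-bimodule isomorphism $A\cong \Hom_A(D(A),\Hom_A(\Hom_A(D(A),A),A))$, whence $\domdim(A)\ge 2$ by Theorem \ref{FKYtheorem}(2). The only genuine care needed throughout is bookkeeping the passage between $\underline{\mod}\text{-}A^e$-isomorphisms and honest bimodule isomorphisms, and making sure the hypothesis $n\ge 1$ is available before invoking Theorem \ref{ausreisyzygy} — which is precisely what the detour through Proposition \ref{propodomdim1} supplies.
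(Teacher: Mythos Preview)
Your argument is correct and matches the paper's approach: both bootstrap from Proposition \ref{propodomdim1} to get $\domdim(A)\ge 1$, invoke Theorem \ref{ausreisyzygy} at $n=1$ to turn ``second syzygy'' into ``reflexive'', and then use Proposition \ref{isomorphismspropo}(3) together with Theorem \ref{FKYtheorem}(2) to conclude $\domdim(A)\ge 2$; likewise both obtain (4) from reflexivity via $A\cong A^{**}=V^{*}\cong\Omega^{2}(\Tr(V))$. One small correction: the Auslander--Bridger sequence you display should carry $\Tr(A)$, not $\Tr(A^{*})$ --- the clean route to $A\cong\Omega^{2}(\Tr(V))$ (and the one the paper takes) is simply to dualise a minimal presentation $P_1\to P_0\to V\to 0$ to the exact sequence $0\to V^{*}\to P_0^{*}\to P_1^{*}\to\Tr(V)\to 0$, with no appeal to $\Tr\Tr$ needed.
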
 
\begin{proof}
That (1) implies (2) and (2) implies (3) is clear by \ref{domdimtorsyzprop}.
Now we show that (3) implies (1). 
So assume that $A$ is a 2-syzygy module. Then $A$ is a syzygy module and by \ref{propodomdim1} $A$ has dominant dimension at least one. 
By \ref{ausreisyzygy} $A$ as an $A$-bimodule is 2-torsionfree, which is equivalent to being reflexive.
Thus the evaluation map $\ev_A: A \rightarrow \Hom_{A^e}(\Hom_{A^e}(A,A^e),A^e)$ is an isomorphism. But by \ref{isomorphismspropo} (3) we have that $\Hom_{A^e}(\Hom_{A^e}(A,A^e),A^e) \cong \Hom_A(D(A),\Hom_A(\Hom_A(D(A),A),A))$ and thus there is an isomorphism of $A$-bimodules $A \rightarrow \Hom_A(D(A),\Hom_A(\Hom_A(D(A),A),A))$ which by \ref{FKYtheorem} (2) implies that $A$ has dominant dimension at least two.
Thus (1), (2) and (3) are equivalent. That (4) implies (3) is trivial.
Now assume (2) and we show that (4) holds.
By \ref{isomorphismspropo} we have that $V=\Hom_A(D(A),A) \cong \Hom_{A^e}(A, A^e)$.
Let $P_1 \rightarrow P_0 \rightarrow V \rightarrow 0$ be a minimal projective resolution of $V$ as an $A$-bimodule.
By the definition of the Auslander-Bridger transpose of $V$ we get the following exact sequence:
$$0 \rightarrow V^{*} \rightarrow P_0^{*} \rightarrow P_1^{*} \rightarrow \Tr(V) \rightarrow 0.$$
Here $(-)^{*}$ denotes the application of the functor $\Hom_{A^e}(-,A^e)$.
Since $P_0^{*}$ and  $P_1^{*}$ are projective again and since we assume that $A$ is reflexive as an $A$-bimodule, we have $V^{*} \cong A^{**} \cong A$ and $A \cong \Omega^2(\Tr(V))$.

\end{proof}

We now give a higher dimensional generalisation of \ref{reflexivetheo} using the following recent result of Luo and Zhang that we state here only in a special case that we need:
\begin{theorem} \label{domdimcharatheorem}
Assume $A$ has dominant dimension at least two.
Then 
$$\domdim(A) = \sup \{ i \geq 1 | \Ext_A^i(D(A) \otimes_A \Hom_A(D(A),A) ,A) \neq 0 \}+1.$$

\end{theorem}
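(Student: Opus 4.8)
The plan is to read the module $D(A)\otimes_A V$ as the value at the canonical bimodule $V=\Hom_A(D(A),A)$ of the Nakayama functor $\nu:=D(A)\otimes_A(-)=D\Hom_A(-,A)$, so that $D(A)\otimes_A V=\nu\nu^{-1}(A)$ where $\nu^{-1}:=\Hom_A(D(A),-)$ and $V=\nu^{-1}(A)$, and then to compute $\Ext_A^i(D(A)\otimes_A V,A)$ by tying a projective resolution of $D(A)\otimes_A V$ to the minimal injective coresolution $0\to A\to I^0\to I^1\to\cdots$ of $A$. Since $\domdim(A)\ge 2$, the first $\domdim(A)$ of the $I^j$ are projective–injective, so applying the left exact functor $\nu^{-1}$ — which sends projective–injectives to projectives — realises $V$ as the kernel of a map between projectives and, more generally, determines the lower part of a projective resolution of $D(A)\otimes_A V$ from $(I^j)$ up to homological degree $\domdim(A)-1$; in parallel one carries along the counit $\nu\nu^{-1}(A)\to A$.

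Applying $\Hom_A(-,A)$ to this data should produce both halves of the asserted equality. On the one hand, using that $A$ is reflexive as a bimodule (Theorem \ref{reflexivetheo}, together with Propositions \ref{isomorphismspropo} and \ref{torformula}), the counit $\nu\nu^{-1}(A)\to A$ becomes an isomorphism in the relevant range, and this forces $\Ext_A^i(D(A)\otimes_A V,A)=0$ for $i\ge\domdim(A)$. On the other hand, the fact that $I^{\domdim(A)}$ is no longer projective obstructs prolonging the resolution and produces a non-zero class in $\Ext_A^{\domdim(A)-1}(D(A)\otimes_A V,A)$. Combining the vanishing above the bound with the non-vanishing at the bound gives $\domdim(A)=\sup\{i\ge 1\mid\Ext_A^i(D(A)\otimes_A V,A)\ne 0\}+1$, with the convention that an empty supremum corresponds to $\domdim(A)=\infty$ (the self-injective-type situation).

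Equivalently, the whole argument can be carried out over the enveloping algebra: $\domdim(A)=\domdim_{A^e}(A)$, so by Theorem \ref{ausreisyzygy} the relevant bound is the order of torsionfreeness of $A$ as an $A^e$-module, i.e.\ the vanishing of $\Ext_{A^e}^i(\Tr_{A^e}(A),A^e)$; using $V\cong\Hom_{A^e}(A,A^e)$, the four-term sequence $0\to V\to P_0^{*}\to P_1^{*}\to\Tr_{A^e}(A)\to 0$ arising from a minimal projective presentation of $A$, and the translation $\Hom_{A^e}(-,A^e)\cong\Hom_A(D(A)\otimes_A(-),A)$ of Proposition \ref{isomorphismspropo}, one rewrites these conditions in terms of $\Ext_A^i(D(A)\otimes_A V,A)$ after a shift by two. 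I expect the real obstacle, in either route, to be precisely the place where $\domdim(A)\ge 2$ must be invested: upgrading the degree-zero isomorphism of Proposition \ref{isomorphismspropo}(2) to one on higher $\Ext$, equivalently controlling $\Tor^A_{>0}$ against $D(A)$ on the syzygies of $V$, so that the functors in play are exact on the relevant subcategory and the degree shift is genuine rather than merely asymptotic.
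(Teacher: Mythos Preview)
The paper does not prove this theorem: its ``proof'' is a one-line citation of Theorem~4.2 in \cite{LZ} (specialised to $X=A$, with the observation that $\domdim(D(A)^{**})\ge 2$ whenever $\domdim(A)\ge 2$). So there is no internal argument to compare your sketch against. Your second route is in any case circular within the paper's logic: identifying $\domdim(A)$ with the torsionfreeness order of $A$ over $A^e$ is precisely the content of Theorem~\ref{mainresult}, and the paper's proof of Theorem~\ref{mainresult} \emph{uses} Theorem~\ref{domdimcharatheorem} as the key input in the inductive step $(3)\Rightarrow(1)$. Theorem~\ref{ausreisyzygy} by itself only supplies the easy direction (dominant dimension $\ge n$ implies $n$-torsionfree, applied over $A^e$); the converse is exactly what is at stake.

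Your first route has a more basic problem: you are aiming at the wrong vanishing. You try to show $\Ext_A^i(D(A)\otimes_A V,A)=0$ for $i\ge\domdim(A)$ and $\ne 0$ at $i=\domdim(A)-1$. Specialise to the gendo-symmetric case $V\cong A$ (Theorem~\ref{Fankoetheorem}(1)): then $D(A)\otimes_A V\cong D(A)$, and the formula must collapse to Theorem~\ref{Fankoetheorem}(2), namely $\domdim(A)=\inf\{i\ge 1\mid\Ext_A^i(D(A),A)\ne 0\}+1$, where the vanishing is for \emph{small} $i$. There are easy gendo-symmetric examples with $\domdim(A)=2$ and $\Ext_A^i(D(A),A)\ne 0$ for all $i\ge 1$ (take $A=\End_B(B\oplus S)$ with $B$ local symmetric non-semisimple and $S$ simple), so your high-degree vanishing is false in general. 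The printed ``$\sup$'' in the statement is almost certainly a typo for ``$\inf$'' --- this is also what the application inside the proof of Theorem~\ref{mainresult} actually requires --- and once you read it that way your whole first paragraph is pointed in the wrong direction. Separately, applying $\nu^{-1}$ to the injective coresolution of $A$ produces a complex of projectives with cohomology $\Ext_A^\bullet(D(A),A)$, not a projective resolution of $D(A)\otimes_A V=\nu\nu^{-1}(A)$; bridging that gap is exactly the unresolved $\Tor$-control you yourself flag at the end.
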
 
\begin{proof}
This is theorem 4.2. of \cite{LZ} in the special case $X=A$ and noting that any algebra of dominant dimension at least two satisfies $\domdim(D(A)^{**}) \geq 2$, see remark 4.3. in \cite{LZ}.

\end{proof}

We now come to the proof of our main result:
\begin{theorem} \label{mainresult}
Let $A$ be a finite dimensional algebra and $n \geq 2$.
Then the following are equivalent:
\begin{enumerate}
\item $A$ has dominant dimension at least $n$.
\item $A$ as a bimodule is $n$-torsionfree.
\item $A$ as a bimodule is an $n$-th syzygy module, that is $A \cong \Omega^n(N)$ for some $A$-bimodule $N$.
\item $A \cong \Omega^{n}(J_{n-2}(V))$ as $A$-bimodules.

\end{enumerate}

\end{theorem}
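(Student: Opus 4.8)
The plan is to prove the four conditions equivalent by induction on $n$, the case $n=2$ being Theorem \ref{reflexivetheo}; so I fix $n\geq 3$ and assume the statement for $n-1$. The implications $(1)\Rightarrow(2)\Rightarrow(3)$ require no induction: applying Proposition \ref{domdimtorsyzprop} to the enveloping algebra $A^e$ and the regular bimodule --- which has dominant dimension $\domdim(A)\geq n$ over $A^e$, and for which also $\domdim(A^e)\geq n$ --- gives $(1)\Rightarrow(2)$, and $(2)\Rightarrow(3)$ holds in general. The implication $(4)\Rightarrow(3)$ is trivial. It therefore suffices to prove $(2)\Rightarrow(4)$ and $(3)\Rightarrow(1)$, which closes the cycle $(1)\Rightarrow(2)\Rightarrow(4)\Rightarrow(3)\Rightarrow(1)$.

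For $(2)\Rightarrow(4)$ I would argue directly over $A^e$, without the inductive hypothesis. Since $A$ is $n$-torsionfree it is in particular reflexive as a bimodule, so, exactly as in the proof of Theorem \ref{reflexivetheo}, the defining exact sequence $0\to V\to Q_0^{*}\to Q_1^{*}\to\Tr(A)\to 0$ obtained from a minimal projective bimodule presentation $Q_1\to Q_0\to A\to 0$ (with $(-)^{*}=\Hom_{A^e}(-,A^e)$ and $V\cong\Hom_{A^e}(A,A^e)$ by Proposition \ref{isomorphismspropo}(2)) exhibits $V$ as $\Omega^2(\Tr(A))$ up to projective summands; applying $\Omega^{n-2}$ and then $\Tr$, which absorb that projective summand, and using Proposition \ref{ringelzhangpropo}(1) gives $J_{n-2}(V)\cong\Tr(\Omega^{n}(\Tr(A)))\cong\mho^{n}(A)$. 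On the other hand, $n$-torsionfreeness of $A$ means, by Proposition \ref{ringelzhangpropo}(4) (applicable since $A$ is connected, hence indecomposable and non-projective as a bimodule), that $[A]$ is the endpoint of a path of length $n$ in the $\mho$-quiver of $A^e$; traversing this path backwards shows $\mho^{k}(A)$ is torsionless for $0\leq k\leq n-1$, so Proposition \ref{ringelzhangpropo}(2) gives $\Omega(\mho^{k+1}(A))\cong\mho^{k}(A)$ for these $k$, hence $\Omega^{n}(\mho^{n}(A))\cong A$. Combining the two, $A\cong\Omega^{n}(J_{n-2}(V))$.

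It remains to prove $(3)\Rightarrow(1)$. If $A\cong\Omega^{n}(N)$ as a bimodule then $A$ is an $(n-1)$-th syzygy bimodule, so by the inductive hypothesis $\domdim(A)\geq n-1\geq 2$, and hence $\domdim(A^e)\geq n-1$; Theorem \ref{ausreisyzygy} applied to $A^e$ with parameter $n-1$ then gives $\Omega^{n}(\mod-A^e)=\TF_{n}(A^e)$, so $A$ is $n$-torsionfree as a bimodule. Since $\domdim(A)\geq 2$, Theorem \ref{domdimcharatheorem} applies, and it is enough to exclude $\domdim(A)=n-1$. On the one hand, if $\domdim(A)=n-1$ then $\Ext_A^{n-2}(D(A)\otimes_A V,A)\neq 0$ by Theorem \ref{domdimcharatheorem}. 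On the other hand, breaking the four-term sequence $0\to V\to Q_0^{*}\to Q_1^{*}\to\Tr(A)\to 0$ into short exact sequences and using that $A$ is reflexive yields $\Ext_{A^e}^{i+2}(\Tr(A),A^e)\cong\Ext_{A^e}^{i}(V,A^e)$ for $i\geq 1$; since $A$ is $n$-torsionfree this forces $\Ext_{A^e}^{n-2}(V,A^e)=0$. The contradiction --- and with it $\domdim(A)\geq n$ --- follows once $\Ext_{A^e}^{n-2}(V,A^e)$ is matched with $\Ext_A^{n-2}(D(A)\otimes_A V,A)$.

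The main obstacle is precisely this comparison: relating $\Ext_{A^e}^{i}(V,A^e)$ over the enveloping algebra to $\Ext_A^{i}(D(A)\otimes_A V,A)$ over $A$ in the relevant range. I would approach it by applying the natural isomorphism $\Hom_{A^e}(-,A^e)\cong\Hom_A(D(A)\otimes_A -,A)$ of Proposition \ref{isomorphismspropo}(2) to a minimal projective bimodule resolution $R_{\bullet}\to V$, noting via Proposition \ref{torformula}(2) that each $D(A)\otimes_A R_j$ is a projective $A$-module, and then controlling the failure of $D(A)\otimes_A -$ to preserve exactness through the terms $\Tor_j^A(D(A),-)$, which I expect to vanish in the relevant degrees precisely because $\domdim(A)\geq n-1$ forces $A$ --- and dually $D(A)$ --- to have enough projective--injective layers. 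The remaining points --- the projective-summand ambiguities built into $\Tr$ and $\Omega$, the justification that $\domdim(A^e)\geq\domdim(A)$, and the bookkeeping of left/right structures --- are routine.
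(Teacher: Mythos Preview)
Your overall architecture coincides with the paper's: $(1)\Rightarrow(2)\Rightarrow(3)$ via Proposition~\ref{domdimtorsyzprop}, $(4)\Rightarrow(3)$ trivially, and $(3)\Rightarrow(1)$ by induction, using Theorem~\ref{ausreisyzygy} to upgrade ``syzygy'' to ``torsionfree'' and then Theorem~\ref{domdimcharatheorem} to push the dominant dimension up by one. The step you flag as the ``main obstacle'' is precisely the crux in the paper as well.

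For $(2)\Rightarrow(4)$ you take a genuinely different route. The paper dualises a minimal projective bimodule resolution $P_{n-1}\to\cdots\to P_0\to V$ directly: the vanishing $\Ext_{A^e}^{i}(V,A^e)=0$ for $1\le i\le n-2$ (coming from $V\cong\Omega^{2}(\Tr(A))$ and $n$-torsionfreeness) makes $0\to V^{*}\to P_0^{*}\to\cdots\to P_{n-1}^{*}$ exact; since the cokernel of $P_{n-2}^{*}\to P_{n-1}^{*}$ is $\Tr(\Omega^{n-2}(V))$ by definition and $V^{*}\cong A^{**}\cong A$, one reads off $A\cong\Omega^{n}(J_{n-2}(V))$. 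Your argument --- identifying $J_{n-2}(V)\cong\Tr\Omega^{n}\Tr(A)=\mho^{n}(A)$ via Proposition~\ref{ringelzhangpropo}(1), and then iterating Proposition~\ref{ringelzhangpropo}(2) along the length-$n$ path in the $\mho$-quiver guaranteed by Proposition~\ref{ringelzhangpropo}(4) --- is correct and more conceptual. It trades an explicit exact sequence for the Ringel--Zhang machinery and explains structurally why $J_{n-2}(V)$ appears. (Your appeal to indecomposability of $A$ as a bimodule is apt; note also that $A$ is non-projective over $A^e$ since $A$ is non-semisimple.)

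Where your sketch is thinner than the paper is the $\Tor$-vanishing needed in $(3)\Rightarrow(1)$. You correctly see that one must have $D(A)\otimes_A R_\bullet$ exact so that its $\Hom_A(-,A)$ computes $\Ext_A^{*}(D(A)\otimes_A V,A)$, and that the obstruction lives in $\Tor$-groups of $D(A)$ against $V$. But the reason these vanish is not ``enough projective--injective layers'' in any direct way. The paper obtains it by passing through the enveloping algebra: one rewrites the obstruction, via Proposition~\ref{torformula}(1) and the duality $D$ on $A^e$-modules, as $D\Ext_{A^e}^{i}(V,A)$, and this is what the already-established torsionfreeness data (the vanishing of $\Ext_{A^e}^{i}(A^{*},-)$ in the needed range) controls. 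So your plan is right, but the mechanism you propose for the vanishing should be replaced by this $A^e$-level duality argument rather than a heuristic about dominant dimension of $D(A)$.
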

\begin{proof}
That (1) implies (2) and (2) implies (3) is clear by \ref{domdimtorsyzprop}.
Now we show that (3) implies (1). 
We use induction to show that (3) implies (1). The case $n=2$ is true by \ref{reflexivetheo}.
Assume the result is true for $n$, we then show it is also true for $n+1$ for $n \geq 2$.
Thus assume that $A$ is an $(n+1)$-th syzygy module.
Then $A$ is especially an $n$-th syzygy module and by induction $A$ has dominant dimension at least $n$. Now by \ref{ausreisyzygy} $A$ is also $(n+1)$-torsionfree, which is equivalent to
$\Ext_{A^e}^i(\Tr(A),A^e)=0$ for $i=1,...,n+1$.
Let $A^{*}=\Hom_{A^e}(A,A^e)$ denote the dual of $A$ as an $A$-bimodule and let 
$G_1 \rightarrow G_0 \rightarrow A \rightarrow 0$ be a minimal projective presentation of $A$ as an $A$-bimodule.
By definition of the Auslander-Bridger transpose of $A$ we have the following exact sequence:
$$0 \rightarrow A^{*} \rightarrow G_0^{*} \rightarrow G_1^{*} \rightarrow \Tr(A) \rightarrow 0.$$
Thus $A^{*} \cong \Omega^2(\Tr(A)$ and thus 
$$\Ext_{A^e}^i(A^{*},A^e)=\Ext_{A^e}^i(\Omega^2(\Tr(A)),A^e) \cong \Ext_{A^e}^{i+2}(\Tr(A),A^e).$$
Now let $V=A^{*} \cong \Hom_A(D(A),A)$ and 
\begin{align} \label{formula0}
P_{n-1} \rightarrow \cdots \rightarrow P_0 \rightarrow V \rightarrow 0
\end{align}
be the beginning of a minimal projective $A$-bimodule resolution of $V$.
Since we know that $A$ as a bimodule is $n+1$-torsionfree and thus $\Ext_{A^e}^i(A^{*},A^e)=\Ext_{A^e}^{i+2}(\Tr(A),A^e)=0$ for $i=1,...,n-1$, we obtain the following exact sequence:
\begin{align} \label{formula1}
0 \rightarrow V^{*} \rightarrow P_0^{*} \rightarrow \cdots P_{n-1}^{*}
\end{align}
where $(-)^{*}=\Hom_{A^e}(-,A^e)$ denotes the $A^e$-dual.
Now by \ref{isomorphismspropo} (2), we have for all $A$-bimodules $U: 
\Hom_{A^e}(U,A^e) \cong \Hom_A(D(A) \otimes_A U, A)$.
Using this in \ref{formula1} , we obtain 
the following exact sequence:
$$ 0 \rightarrow \Hom_A(D(A) \otimes_A V,A) \rightarrow \Hom_A(D(A) \otimes_A P_0,A) \rightarrow \cdots \Hom_A(D(A) \otimes_A P_{n-1},A) \rightarrow \cdots .$$
On the other hand tensoring the minimal projective resolution \ref{formula0} with $D(A)$ gives the following beginning of a projective resolution of $A$-modules:
\begin{align} \label{formula2}
D(A) \otimes_A P_{n-1} \rightarrow \cdots \rightarrow D(A) \otimes_A P_0 \rightarrow D(A) \otimes_A V \rightarrow 0.
\end{align}
Here we used two things, first that for a general projective $A$-bimodule $P$ we have that $T \otimes_A P$ is a projective $A$-module for any $A$-module $T$ by \ref{torformula} (2). Second, we used that $\Tor_i(D(A),V)=0$ for all $i=1,...,n-1$ so that \ref{formula2} is really exact. To see this note that $\Tor_i^{A^e}(D(A),V) \cong D \Ext_{A^e}^i(D(A),D(V)) \cong D \Ext_{A^e}^i(V,A)=D \Ext_{A^e}^i(A^{*},A)=0$ for $i=1,...n-1$, using \ref{torformula} (1).
This shows that in case we have $\Ext_{A^e}^{i+2}(\Tr(A),A^e)=\Ext_{A^e}^i(A^{*},A^e)=0$ for $i=1,...,n-1$, we also have $\Ext^i(D(A) \otimes_A V ,A)= \Ext^i(D(A) \otimes_A \Hom_A(D(A),A) ,A)=0$ and by \ref{domdimtorsyzprop} this shows that $\domdim(A) \geq n+1$. Thus (1), (2) and (3) are equivalent. 
Now assume (4), then we trivially have (3).
Assume (2) now and we want to show (4), thus assume that $A$ is $n$-torsionfree as an $A$-bimodule, that is $\Ext_{A^e}^i(\Tr(A),A^e)=0$ for $i=1,...,n$ and $n \geq 2$.
Let 
$$P_{n-1} \rightarrow P_{n-2} \rightarrow \cdots P_0 \rightarrow V \rightarrow 0$$
be a minimal projective resolution of $V$ and apply the functor $\Hom_{A^e}(-,A^e)$ to it to obtain the exact sequence
\begin{align} \label{formula3}
0 \rightarrow V^{*} \rightarrow P_0^{*} \rightarrow \cdots P_{n-2}^{*} \rightarrow P_{n-1}^{*},
\end{align}
where this sequence is exact since $V \cong A^{*}$ and 
$$\Ext_{A^e}^i(V,A^e)=\Ext_{A^e}^i(A^{*},A^e)=\Ext_{A^e}^i(\Omega^2(\Tr(A)),A^e)=\Ext_{A^e}^{i+2}(\Tr(A),A^e)=0$$
for $i=1,...,n-2$ by assumption.
Since we have a minimal projective presentation 
$$P_{n-1} \rightarrow P_{n-2} \rightarrow \Omega^{n-2}(V) \rightarrow 0$$
by definition,  
the cokernel of the map $P_{n-2}^{*} \rightarrow P_{n-1}^{*}$ is equal to $\Tr(\Omega^{n-2}(V))$.
Since all terms $P_i^{*}$ are projective, we obtain from \ref{formula3} that $V^{*} \cong \Omega^{n}(\Tr(\Omega^{n-2}(V)))$.
Since we assume $n \geq 2$, $A$ is reflexive and thus $V^{*} \cong A^{**} \cong A$, which shows that $A \cong \Omega^{n}(\Tr(\Omega^{n-2}(V)))$.
\end{proof}

We apply the previous theorem to give new formulas for the Hochschild cohomology and homology of finite dimensional algebras. For the definition and basic properties of the Hochschild homology and cohomology we refer for example to \cite{W}.
Recall that the functor $\tau_{n-1}:=D \Tr \Omega^{n-2}= \tau \Omega^{n-2}$ is called \emph{higher Auslander-Reiten translate}, following \cite{Iya}.
\begin{proposition} \label{formulahochschild}
Let $A$ be a finite dimensional algebra with dominant dimension $n \geq 2$.
\begin{enumerate}
\item We have for the Hochschild homology and $l \geq 1$:
$$HH_l(A) \cong D \Ext_{A^e}^{l+n}(A, \tau_{n-1}(V)).$$
\item We have for the Hochschild cohomology and $l \geq 1$:
$$HH^l(A) \cong \Ext_{A^e}^{l+n}(D(A), \tau_{n-1}(V)).$$

\end{enumerate}
\end{proposition}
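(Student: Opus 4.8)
The plan is to start from the bimodule isomorphism $A \cong \Omega^{n}(\Tr(\Omega^{n-2}(V)))$ furnished by Theorem \ref{mainresult}(4), rewrite $\Tr(\Omega^{n-2}(V))$ in terms of the higher Auslander--Reiten translate, and then feed this into the standard description of Hochschild (co)homology as $\Ext$ and $\Tor$ over the enveloping algebra. Recall that $HH^l(A)=\Ext_{A^e}^l(A,A)$ and $HH_l(A)=\Tor_l^{A^e}(A,A)$, and by Proposition \ref{torformula}(1) we have $\Tor_l^{A^e}(A,A)\cong D\Ext_{A^e}^l(A,D(A))$. So the whole computation amounts to replacing one of the two copies of $A$ in these formulas by its $n$-th syzygy presentation and using dimension shift.

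First I would record the identity $\Tr(\Omega^{n-2}(V)) \cong D\tau_{n-1}(V)$; indeed $\tau_{n-1}:=D\Tr\Omega^{n-2}$ by definition, so applying $D$ gives $D\tau_{n-1}(V)=DD\Tr\Omega^{n-2}(V)\cong \Tr\Omega^{n-2}(V)$ (using that $D$ is a duality on finite dimensional modules, and that $\Tr\Omega^{n-2}(V)$ is finitely generated over $A^e$). Hence Theorem \ref{mainresult}(4) reads $A\cong \Omega^{n}\bigl(D\tau_{n-1}(V)\bigr)$ as $A^e$-modules. Next, for the cohomology statement, apply $\Ext_{A^e}^l(D(A),-)$ is not quite what we want; rather I would use the syzygy on the \emph{second} argument: $\Ext_{A^e}^l(D(A),A)=\Ext_{A^e}^l\bigl(D(A),\Omega^{n}(D\tau_{n-1}(V))\bigr)$, and since syzygies shift $\Ext$ up by the syzygy degree (valid because over $A^e$, a finite dimensional algebra, $\Ext_{A^e}^{l}(X,\Omega^n Y)\cong \Ext_{A^e}^{l+n}(X,Y)$ for $l\geq 1$, the usual dimension-shifting along the projective resolution of $Y$, with the caveat that one may pick up projective summands that contribute nothing to $\Ext$ in positive degrees), we get $\Ext_{A^e}^l(D(A),A)\cong \Ext_{A^e}^{l+n}(D(A),\tau_{n-1}(V))$ for $l\geq 1$. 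But $\Ext_{A^e}^l(D(A),A)\cong \Ext_{A^e}^l(A,A)=HH^l(A)$? That is not immediate, so instead I would run the argument with the first copy of $A$ replaced: $HH^l(A)=\Ext_{A^e}^l(A,A)$, and write $A\cong\Omega^n(D\tau_{n-1}(V))$ in the \emph{second} slot directly: $HH^l(A)=\Ext_{A^e}^l(A,\Omega^n(D\tau_{n-1}(V)))\cong\Ext_{A^e}^{l+n}(A,D\tau_{n-1}(V))$. Then $\Ext_{A^e}^{l+n}(A,D\tau_{n-1}(V))\cong D\Tor_{l+n}^{A^e}(A,\tau_{n-1}(V))$ by Proposition \ref{torformula}(1) applied over $A^e$ — wait, that gives a $\Tor$, which matches the \emph{homology} side, not the cohomology side.

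So the correct bookkeeping is: use Proposition \ref{torformula}(1) over $A^e$ in the form $\Tor_l^{A^e}(M,N)\cong D\Ext_{A^e}^l(M,D(N))$. For homology, $HH_l(A)=\Tor_l^{A^e}(A,A)\cong\Tor_l^{A^e}\bigl(A,\Omega^n(D\tau_{n-1}(V))\bigr)$; now $\Tor$ shifts \emph{down} with syzygies in the second argument, which is the wrong direction, so instead I would dualize first: $HH_l(A)\cong D\Ext_{A^e}^l(A,D(A))$, and then substitute $D(A)\cong D\Omega^n(D\tau_{n-1}(V))$... which is awkward since $D$ does not commute with $\Omega$. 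The clean route, and the one I would actually carry out, is: for cohomology, $HH^l(A)=\Ext_{A^e}^l(A,A)\cong\Ext_{A^e}^l(A,\Omega^n(D\tau_{n-1}(V)))\cong\Ext_{A^e}^{l+n}(A,D\tau_{n-1}(V))\cong\Ext_{A^e}^{l+n}(D(A),\tau_{n-1}(V))$, where the last step uses the adjunction/duality $\Ext_{A^e}^j(A,D(N))\cong\Ext_{A^e}^j(N,D(A))$? No — the honest last step is simply that $D\tau_{n-1}(V)=DD\Tr\Omega^{n-2}(V)$, but we want $\Ext$ into $\tau_{n-1}(V)$ not $D\tau_{n-1}(V)$. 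Resolving this sign/duality juggling is precisely the main obstacle. The cleanest fix: use Proposition \ref{torformula}(1) in the form $\Ext_{A^e}^{l+n}(A,D\tau_{n-1}(V))\cong D\Tor_{l+n}^{A^e}(A,\tau_{n-1}(V))$ — no. I would instead invoke $\Ext_{A^e}^{j}(X, D(Y)) \cong D(Y\otimes^{\mathsf{L}}_{A^e}X)_j = D\Tor_j^{A^e}(Y,X)$, and separately, for the statement as written, observe that Theorem \ref{mainresult}(4) can equally be stated with $A$ in the covariant-dual position. Concretely, the final steps are: (i) $\Tr\Omega^{n-2}(V)\cong D\tau_{n-1}(V)$; (ii) $HH^l(A)=\Ext_{A^e}^l(A,A)$, substitute $A\cong\Omega^n D\tau_{n-1}(V)$ in the \emph{first} argument and dimension-shift to get $\Ext_{A^e}^{l+n}(D\tau_{n-1}(V),A)$? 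No, $\Ext$ is covariant in the second variable only. I will substitute in the \emph{second} argument: $HH^l(A)\cong\Ext_{A^e}^{l+n}(A,D\tau_{n-1}(V))$; then apply $\Hom$-tensor duality $\Ext_{A^e}^j(A,D\tau_{n-1}(V))\cong D(\tau_{n-1}(V)\otimes^{\mathsf L}_{A^e}A)_j$, and recognize via Proposition \ref{torformula}(1) that $D(\tau_{n-1}(V)\otimes^{\mathsf L}_{A^e}A)_j \cong \Ext_{A^e}^j(\tau_{n-1}(V),DA)$ — and then again substitute $A\cong\Omega^nD\tau_{n-1}(V)$... The main obstacle, to be blunt, is getting the variance and the $D$'s to line up so that one copy of $A$ becomes $D(A)$ (for cohomology) or stays $A$ with a $D$ outside (for homology); this is a routine but error-prone manipulation. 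I expect it resolves by applying Proposition \ref{torformula}(1) \emph{once} to turn $HH_l(A)=\Tor_l^{A^e}(A,A)$ into $D\Ext_{A^e}^l(A,D(A))$, then substituting $A\cong\Omega^n D\tau_{n-1}(V)$ into the first slot is illegal, so substitute $D(A)$ — but $D(A)$ is not a syzygy. Therefore the genuine key identity I would establish and use is $D(A)\cong\Omega^{n}\bigl(\text{something}\bigr)$ derived by applying $D$ to a \emph{co}syzygy statement, OR, more simply, I would substitute $A\cong\Omega^nD\tau_{n-1}(V)$ into the second slot of $HH^l(A)=\Ext_{A^e}^l(A,A)$ to get part (2) after one application of Proposition \ref{torformula}(1) to rewrite $\Ext_{A^e}^{l+n}(A,D\tau_{n-1}(V))$ — and for part (1), apply Proposition \ref{torformula}(1) to $HH_l(A)=\Tor_l^{A^e}(A,A)\cong D\Ext_{A^e}^l(A,D(A))$ and then use that $\Ext_{A^e}^l(A,D(A))\cong\Ext_{A^e}^l(DD\tau_{n-1}(V)[\text{shifted}],\ldots)$. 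In summary: the proof is short modulo a careful application of dimension shifting and Proposition \ref{torformula}(1), and the hard part is purely the bookkeeping of dualities — which I would nail down by working entirely inside $\mathrm D^{\mathrm b}(A^e)$ with the triangle $\Omega^n X\to (\text{proj})\to\cdots\to X$ and the duality $\mathsf RHom_{A^e}(-,DA^e)\simeq D(-)$.
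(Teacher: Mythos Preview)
Your proposal gets stuck on a genuine error, not just bookkeeping. You try to substitute $A\cong\Omega^{n}\bigl(\Tr\Omega^{n-2}(V)\bigr)$ into the \emph{second} argument of $\Ext_{A^e}^{l}(-,-)$ and then claim $\Ext_{A^e}^{l}(X,\Omega^{n}Y)\cong\Ext_{A^e}^{l+n}(X,Y)$ for $l\geq 1$. That isomorphism is false in general: from $0\to\Omega Y\to P\to Y\to 0$ one only gets $\Ext^{l}(X,\Omega Y)\cong\Ext^{l+1}(X,Y)$ when $\Ext^{l}(X,P)=\Ext^{l+1}(X,P)=0$, and projective $A^e$-modules are not injective (nor do you have any control over $\Ext_{A^e}^{l}(A,A^e)\cong\Ext_A^{l}(D(A),A)$ here). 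Your parenthetical about ``projective summands contributing nothing to $\Ext$ in positive degrees'' confuses the two variances: projectives are $\Ext$-acyclic in the first variable, not the second. This is why every subsequent route you try (swapping via $D$, passing to $\Tor$, etc.) keeps producing mismatches.

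The fix---and the paper's argument---is simply to substitute in the \emph{first} argument, where dimension shifting is unconditional. For cohomology:
\[
HH^{l}(A)=\Ext_{A^e}^{l}(A,A)\cong\Ext_{A^e}^{l}\bigl(\Omega^{n}\Tr\Omega^{n-2}(V),A\bigr)\cong\Ext_{A^e}^{l+n}\bigl(\Tr\Omega^{n-2}(V),A\bigr).
\]
Now apply the duality $D$, using that $(A^e)^{op}\cong A^e$ so that $\Ext_{A^e}^{j}(X,Y)\cong\Ext_{A^e}^{j}(DY,DX)$, to get
\[
\Ext_{A^e}^{l+n}\bigl(\Tr\Omega^{n-2}(V),A\bigr)\cong\Ext_{A^e}^{l+n}\bigl(D(A),D\Tr\Omega^{n-2}(V)\bigr)=\Ext_{A^e}^{l+n}\bigl(D(A),\tau_{n-1}(V)\bigr).
\]
For homology, first use Proposition~\ref{torformula}(1) to write $HH_{l}(A)\cong D\Ext_{A^e}^{l}(A,D(A))$, then run exactly the same two steps (substitute in the first slot, shift, dualise) to reach $D\Ext_{A^e}^{l+n}(A,\tau_{n-1}(V))$. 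The ``hard bookkeeping'' you anticipated evaporates once the substitution is made on the correct side; your statement that substituting in the first slot is ``illegal'' is simply mistaken.
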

\begin{proof}
By \ref{mainresult} we have that in case $A$ has dominant dimension $n$: $A \cong \Omega^n(\Tr(\Omega^{n-2}(V))$.
\begin{enumerate}
\item 
Now $HH_i(A)=\Tor_i^{A^e}(A,A) \cong D \Ext_{A^e}^i (A,D(A))$, using \ref{torformula} (1).
Using $A \cong \Omega^n(\Tr(\Omega^{n-2}(V))$, we obtain 
$$\Ext_{A^e}^i (A,D(A)) \cong \Ext_{A^e}^i (\Omega^n(\Tr(\Omega^{n-2}(V)),D(A)) \cong \Ext_{A^e}^{i+n} (\Tr(\Omega^{n-2}(V)),D(A)) \cong \Ext_{A^e}^{i+n}(A, \tau_{n-1}(V)).$$
\item Here 
$$HH^i(A)= \Ext_{A^e}^i(A,A) \cong \Ext_{A^e}^i (\Omega^n(\Tr(\Omega^{n-2}(V)),A) \cong \Ext_{A^e}^{i+n} (\Tr(\Omega^{n-2}(V)),A) \cong \Ext_{A^e}^{i+n}(D(A), \tau_{n-1}(V)).$$

\end{enumerate}
\end{proof}

Since the formulas are especially nice for gendo-symmetric algebras, which are exactly those algebras with $V \cong A$ as $A^e$-modules, we state this as a corollary:
\begin{corollary}
Let $A$ be a gendo-symmetric algebra with dominant dimension $n \geq 2$.
\begin{enumerate}
\item We have for the Hochschild homology and $l \geq 1$:
$$HH_l(A) \cong D \Ext_{A^e}^{l+n}(A, \tau_{n-1}(A)).$$
\item We have for the Hochschild cohomology and $l \geq 1$:
$$HH^l(A) \cong \Ext_{A^e}^{l+n}(D(A), \tau_{n-1}(A)).$$

\end{enumerate}

\end{corollary}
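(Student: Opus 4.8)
The plan is to obtain this Corollary as an immediate specialisation of Proposition \ref{formulahochschild} to the case $V \cong A$. First I would recall that, by Theorem \ref{Fankoetheorem}(1), an algebra $A$ is gendo-symmetric precisely when the canonical bimodule $V = \Hom_A(D(A),A)$ is isomorphic to $A$ as a right $A^e$-module, i.e.\ as an $A$-bimodule; in particular every gendo-symmetric algebra has dominant dimension at least two, so the standing hypothesis $n \geq 2$ needed to invoke Proposition \ref{formulahochschild} is automatic here, and the extra assumption $\domdim(A)=n$ is exactly the hypothesis of the Corollary.

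Next I would observe that $\tau_{n-1} = D\Tr\Omega^{n-2}$ is a functor, hence well defined up to isomorphism on objects, so the bimodule isomorphism $V \cong A$ yields $\tau_{n-1}(V) \cong \tau_{n-1}(A)$ as $A$-bimodules. Substituting this isomorphism into the two formulas of Proposition \ref{formulahochschild}, namely $HH_l(A) \cong D\Ext_{A^e}^{l+n}(A,\tau_{n-1}(V))$ and $HH^l(A) \cong \Ext_{A^e}^{l+n}(D(A),\tau_{n-1}(V))$ for $l \geq 1$, immediately produces the claimed isomorphisms $HH_l(A) \cong D\Ext_{A^e}^{l+n}(A,\tau_{n-1}(A))$ and $HH^l(A) \cong \Ext_{A^e}^{l+n}(D(A),\tau_{n-1}(A))$.

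Since the only ingredients are Theorem \ref{Fankoetheorem}(1) and Proposition \ref{formulahochschild}, there is essentially no obstacle; the single point requiring (minor) care is that $\Ext_{A^e}(-,-)$ and $D(-)$ carry isomorphic bimodules to isomorphic groups, so the substitution $\tau_{n-1}(V)\rightsquigarrow\tau_{n-1}(A)$ is legitimate and no further minimality or rigidity hypothesis on the presentations is needed beyond what Proposition \ref{formulahochschild} already supplies.
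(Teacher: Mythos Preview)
Your proposal is correct and follows exactly the approach the paper intends: the corollary is stated immediately after the remark that gendo-symmetric algebras are precisely those with $V \cong A$ as $A^e$-modules, and the paper gives no further proof, so the substitution of $\tau_{n-1}(A)$ for $\tau_{n-1}(V)$ in Proposition~\ref{formulahochschild} via Theorem~\ref{Fankoetheorem}(1) is all that is needed. Your additional remarks about functoriality of $\tau_{n-1}$ and invariance of $\Ext$ under isomorphism merely make explicit what the paper leaves tacit.
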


In the next corollary we remark that \ref{formulahochschild} can be used to give vanishing results for the Hochschild homology and cohomology for algebras with dominant dimension at least two.

\begin{corollary}
Let $A$ be a finte dimensional algebra over a field $K$ with dominant dimension $n \geq 2$.
\begin{enumerate}
\item We have $HH_l(A)=0$ for all $l > \pd_{A^e}(A)-n$.
\item We have $HH^l(A)=0$ for all $l > \pd_{A^e}(D(A))-n$.

\end{enumerate}

\end{corollary}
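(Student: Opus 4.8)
The plan is to read off both vanishing statements directly from the Hochschild (co)homology formulas of Proposition~\ref{formulahochschild}, using nothing beyond the elementary fact that $\Ext_{A^e}^{j}(M,N)=0$ for every $A$-bimodule $N$ as soon as $j>\pd_{A^e}(M)$ (take a projective $A^e$-resolution of $M$ of length $\pd_{A^e}(M)$ and apply $\Hom_{A^e}(-,N)$).

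For part (1), I would start from the isomorphism $HH_l(A)\cong D\Ext_{A^e}^{l+n}(A,\tau_{n-1}(V))$ furnished by Proposition~\ref{formulahochschild}(1), which applies because $\domdim(A)=n\geq 2$. Since $D=\Hom_K(-,K)$ is an exact duality, $HH_l(A)$ vanishes if and only if $\Ext_{A^e}^{l+n}(A,\tau_{n-1}(V))$ vanishes, and the latter is zero whenever the cohomological degree $l+n$ exceeds $\pd_{A^e}(A)$, i.e.\ for all $l>\pd_{A^e}(A)-n$. If $\pd_{A^e}(A)=\infty$ the assertion is vacuous, so there is nothing to check in that case.

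For part (2), I would argue in exactly the same way starting from $HH^l(A)\cong\Ext_{A^e}^{l+n}(D(A),\tau_{n-1}(V))$ in Proposition~\ref{formulahochschild}(2): this group is zero once $l+n>\pd_{A^e}(D(A))$, that is, for all $l>\pd_{A^e}(D(A))-n$.

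There is essentially no obstacle here, as the corollary is a formal consequence of the Hochschild formulas; the only point worth a remark is that the bounds are informative only when the relevant enveloping-algebra projective dimension is finite, so the statement is of most interest when $A$ has finite global dimension (so that $\pd_{A^e}(A)<\infty$ and $\pd_{A^e}(D(A))<\infty$) or more generally when these dimensions can be controlled, and in the finite global dimension case combining the two parts sharpens the classical concentration of Hochschild (co)homology in low degrees by the shift $n=\domdim(A)$.
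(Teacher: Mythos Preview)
Your proposal is correct and follows exactly the paper's own approach: the paper simply says that both parts ``follow immediately from the previous proposition,'' and you have spelled out precisely that immediate deduction using the vanishing of $\Ext_{A^e}^{j}(M,-)$ beyond $\pd_{A^e}(M)$. Your additional remarks about the vacuous infinite-projective-dimension case and the sharpening in finite global dimension are consistent with the paper's subsequent discussion.
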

\begin{proof}
(1) and (2) follow immediately from the previous proposition.

\end{proof}

We remark that part (1) can be used to obtain a quick proof that the Hochschild homology of all higher Auslander algebras over an algebraically closed field vanishes in positive degrees since we have $\pd_{A^e}(A)=\gldim(A)$ by work of Happel in \cite{Ha} in case the field is algebraically closed.
While the projective dimension of the bimodule $A$ is known to be equal to the global dimension of the algebra for algebraically closed fields, it seems that the projective dimension of $D(A)$ as a bimodule is not known and there is no homological description for $\pd_{A^e}(D(A))$ or equivalently $\id_{A^e}(A)$ in the literature. This motivates us to pose the following question:
\begin{question}
Let $A$ be a finite dimensional algebra. Is there a nice homological description of the projective dimension of $D(A)$ as a bimodule, or equivalently of the injective dimension of $A$ as a bimodule?
\end{question}
We give a quick example that shows that the projective dimension of $D(A)$ can be equal to the global dimension, but this is not true in general.
\begin{example}
The Nakayama algebra with Kupisch series [2,1] has global dimension 1 and $D(A)$ has projective dimension 1 as a bimodule. For the Nakayama algebra with Kupisch series [2,3] the bimodule $D(A)$ has projective dimension 4, while the algebra has global dimension 2.

\end{example}

The following conjecture is known as the Nakayama conjecture and is stated for the first time by Nakayama in 1958, see \cite{Nak}.
\begin{conjecture} \label{Nakayama conjecture}
A finite dimensional algebra $A$ is selfinjective if and only if $A$ has infinite dominant dimension.
\end{conjecture}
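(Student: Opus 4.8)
The plan is to dispose of the easy implication at once and then describe how I would attack the genuine content, which, as will become clear, coincides with the classical open problem. If $A$ is selfinjective then the regular left module $A$ is injective, so its minimal injective coresolution is just $0 \to A \xrightarrow{\;\mathrm{id}\;} A \to 0$; every term is projective (indeed $I_0 = A$ is projective-injective and $I_i = 0$ for $i \geq 1$), whence $\domdim(A) = \infty$. This direction needs no new ideas and no bimodule machinery.

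For the converse — that $\domdim(A) = \infty$ forces $A$ to be selfinjective — I would feed the hypothesis into Theorem \ref{mainresult}. By that theorem, $\domdim(A) = \infty$ is equivalent to the regular bimodule $A$ being an $n$-th syzygy $A^e$-module for every $n \geq 1$, equivalently to $A$ being $n$-torsionfree over $A^e$ for all $n$; by Proposition \ref{ringelzhangpropo}(4) this says precisely that the bimodule $A$ is the endpoint of arbitrarily long paths in the $\mho$-quiver of $A^e$. The natural strategy is then to upgrade this infinite family of torsionfree conditions to the single statement that $A$ is Gorenstein projective as an $A^e$-module — which would additionally require $\Ext_{A^e}^i(A,A^e)=0$ for all $i>0$ — and then to invoke the Gorenstein bimodule conjecture, via the corollary of the main result relating it to the Nakayama and first Tachikawa conjectures, to conclude that $A$ is selfinjective.

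The hard part, and indeed the main obstacle, is that this last step is circular in general: the Gorenstein bimodule conjecture is itself open, and the passage from ``$A$ is $\infty$-torsionfree over $A^e$'' to ``$A$ is Gorenstein projective over $A^e$'' demands exactly the kind of $\Ext$-vanishing that is unavailable without extra hypotheses. In other words, the bimodule reformulation does not by itself break the classical deadlock — there is still no mechanism converting infinitely many syzygy conditions on $A$ over $A^e$ into injectivity of the regular $A$-module. What the reformulation does buy, and what the paper actually proves, are the conditional and partial statements: the equivalence, for a fixed $A$, of the Nakayama conjecture with a Gorenstein-homological condition on $A^e$; its link to the first Tachikawa conjecture; and unconditional validity for algebras of finite finitistic dimension, as well as (following \cite{S} and \cite{Mar}) for Gorenstein and left weakly Gorenstein algebras. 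I therefore do not expect to settle the conjecture in full; the route above is the one by which the paper's results chip away at it.
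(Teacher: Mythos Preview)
The statement is labeled as a \emph{conjecture} in the paper and is not proved there; it is the classical open Nakayama conjecture, and the paper offers no proof to compare your attempt against. You correctly handle the trivial direction (selfinjective $\Rightarrow$ infinite dominant dimension) and accurately diagnose that the converse remains open, including the circularity in any attempt to deduce it from the paper's bimodule reformulation via the Gorenstein bimodule conjecture. Your summary of what the paper actually establishes---the equivalence with $\infty$-torsionfreeness of $A$ over $A^e$, the link to the first Tachikawa and Gorenstein bimodule conjectures in Theorem~\ref{gorbimodtheorem}, and the unconditional validity under finite finitistic dimension---matches the paper's content.
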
 

Our main result gives a new viewpoint on the Nakayama conjecture using the $\mho$-quiver of Ringel and Zhang, that we state as a corollary.

\begin{corollary}
The following are equivalent for a finite dimensional algebra $A$.
\begin{enumerate}
\item $A$ has infinite dominant dimension.
\item $A$ is $\infty$-torsionfree.
\item $A$ is the end of an infinite path in the $\mho$-quiver of $A^e$.

\end{enumerate}

\end{corollary}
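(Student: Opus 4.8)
The plan is to read off this corollary from Theorem~\ref{mainresult} together with the properties of the $\mho$-quiver collected in Proposition~\ref{ringelzhangpropo}, so that no new computation is required.

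For the equivalence of (1) and (2) I would argue as follows. By Theorem~\ref{mainresult}, for every $n \geq 2$ the algebra $A$ has dominant dimension at least $n$ if and only if $A$ is $n$-torsionfree as an $A^e$-module, that is, $\Ext_{A^e}^i(\Tr(A),A^e)=0$ for $i=1,\dots,n$. Intersecting these conditions over all $n \geq 2$, one gets that $A$ has infinite dominant dimension if and only if $\Ext_{A^e}^i(\Tr(A),A^e)=0$ for all $i \geq 1$, which is precisely the assertion that $A$ is $\infty$-torsionfree as a bimodule. Note that $\Tr(A)\neq 0$, since $A$ is non-semisimple and hence not separable, i.e.\ not projective as an $A^e$-module, so this is a genuine condition and $A$ really is a vertex of the $\mho$-quiver of $A^e$ (it is moreover indecomposable as a bimodule, because $\End_{A^e}(A)\cong Z(A)$ is local for $A$ connected).

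For the equivalence of (2) and (3) I would apply Proposition~\ref{ringelzhangpropo} to the enveloping algebra $A^e$ and the $A^e$-module $A$. Part~(4) of that proposition says that $[A]$ is the end of a path of length $t \geq 1$ in the $\mho$-quiver of $A^e$ if and only if $A$ is $t$-torsionfree as a bimodule. It then remains to upgrade ``end of a path of every finite length'' to ``end of an infinite path''. For this I would use part~(2) of Proposition~\ref{ringelzhangpropo}: if $X$ is torsionless indecomposable non-projective, then $[\mho(X)]$ is the source of the unique arrow of the $\mho$-quiver ending at $[X]$. Hence every path ending at $[A]$ is an initial segment of the single sequence $\cdots \rightarrow [\mho^2(A)] \rightarrow [\mho(A)] \rightarrow [A]$, so this sequence is an infinite path exactly when paths ending at $[A]$ of arbitrarily large length exist. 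Combining with part~(4), $A$ is the end of an infinite path in the $\mho$-quiver of $A^e$ if and only if $A$ is $t$-torsionfree as a bimodule for all $t\geq 1$, i.e.\ $\infty$-torsionfree, which closes the chain of equivalences.

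I do not anticipate a serious obstacle. The only point demanding a little care is the passage from finite paths to an infinite path: rather than invoking a K\"onig-type argument on the tree of paths ending at $[A]$, one uses uniqueness of the incoming arrow at each torsionless vertex, which shows that this tree is in fact a single (finite or infinite) chain, so that unbounded length is equivalent to infiniteness.
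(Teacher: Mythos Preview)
Your proof is correct and follows essentially the same approach as the paper, which simply cites Theorem~\ref{mainresult} for (1)$\Leftrightarrow$(2) and Proposition~\ref{ringelzhangpropo}(4) for (2)$\Leftrightarrow$(3). You are in fact more careful than the paper in justifying the passage from arbitrarily long finite paths to a genuine infinite path (via the uniqueness of the incoming arrow at each torsionless vertex of the $\mho$-quiver) and in checking that $A$ is an indecomposable non-projective $A^e$-module, points the paper leaves implicit.
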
 
\begin{proof}
The equivalence of (1) an (2) follows directly by our main result \ref{mainresult}, while the equivalence of (2) and (3) is a consequence of \ref{ringelzhangpropo} (4).

\end{proof}

\section{The Gorenstein bimodule conjecture}

We first recall some definitions from Gorenstein homological algebra.
Recall that a finite dimensional algebra $A$ is called \emph{Gorenstein} in case the left and right injective dimension of the regular $A$-module coincide and are finite.
An $A$-module $M$ is called \emph{Gorenstein projective} in case $\Ext_A^i(M,A)=0=\Ext_A^i(\Tr(M),A)=0$ for all $i \geq 1$.
Following \cite{RZ} an algebra $A$ is called \emph{left weakly Gorenstein} in case every module $M$ with $\Ext_A^i(M,A)=0$ for $i>0$ is Gorenstein projective.
We give the following new Gorenstein homological conjecture, called the Gorenstein bimodule conjecture, that we will motivate afterwards by relating it to the Nakayama and Tachikawa conjectures:
\begin{conjecture} \label{gorbimodconjecture}
A finite dimensional algebra $A$ is selfinjective if and only if $A$ as a bimodule is Gorenstein projective.

\end{conjecture}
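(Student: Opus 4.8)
The statement is a biconditional whose ``only if'' direction is routine: if $A$ is selfinjective then $A^e=A\otimes_K A^{op}$ is selfinjective as well, so \emph{every} $A^e$-module, and in particular the regular bimodule $A$, is Gorenstein projective. I would therefore devote the plan to the converse --- assuming $A$ is Gorenstein projective as an $A^e$-module, deduce that $A$ is selfinjective. By definition the hypothesis says $\Ext_{A^e}^i(A,A^e)=0$ and $\Ext_{A^e}^i(\Tr(A),A^e)=0$ for all $i\geq 1$, and the first move is to translate each of these two vanishing conditions into a familiar one.

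For the transpose condition, $\Ext_{A^e}^i(\Tr(A),A^e)=0$ for $i=1,\dots,n$ is precisely the assertion that $A$ is $n$-torsionfree as a bimodule, so the vanishing for all $i$ says that $A$ is $\infty$-torsionfree, which by \ref{mainresult} (applied for every $n\geq 2$) is equivalent to $\domdim(A)=\infty$. For the first condition I would take a minimal projective $A^e$-resolution $P_\bullet\to A$: each $P_i$ is projective, hence flat, as a one-sided $A$-module, so $D(A)\otimes_A P_\bullet\to D(A)\otimes_A A\cong D(A)$ is exact, i.e.\ is a resolution of $D(A)$ whose terms are projective one-sided $A$-modules by \ref{torformula}(2). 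Feeding $P_\bullet$ into the natural isomorphism $\Hom_{A^e}(-,A^e)\cong\Hom_A(D(A)\otimes_A-,A)$ of \ref{isomorphismspropo}(2) and passing to cohomology then gives $\Ext_{A^e}^i(A,A^e)\cong\Ext_A^i(D(A),A)$ for all $i$. Thus ``$A$ Gorenstein projective as a bimodule'' is equivalent to the conjunction: $\domdim(A)=\infty$ \emph{and} $\Ext_A^i(D(A),A)=0$ for all $i\geq 1$.

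Having carried out this reduction, the remaining task is to show that an algebra satisfying both of those conditions is selfinjective. The first condition is exactly the hypothesis of the Nakayama conjecture and the second is exactly the hypothesis of the first Tachikawa conjecture, so for any class of algebras where one of these conjectures is known the conclusion follows at once: e.g.\ for algebras of finite finitistic dimension --- where finiteness of the finitistic dimension is classically known to force the Nakayama conjecture --- and for gendo-symmetric algebras, where \ref{Fankoetheorem}(2) shows the two conditions are already equivalent, so the statement there \emph{is} the Nakayama conjecture. For a general proof the plan would be to use the two hypotheses together rather than separately: invoke the Morita--Tachikawa correspondence to write $A\cong\End_B(M)$ with $B=eAe$ and $M$ a generator-cogenerator of $\mod$-$B$, and try to use $\Ext_A^{>0}(D(A),A)=0$ to force $M$ to be projective over $B$, equivalently $A=B$ selfinjective.

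The hard part is precisely this last step. Neither hypothesis in isolation is known to force selfinjectivity --- these are two of the most resistant open conjectures in the homological theory of finite dimensional algebras --- and no tool is currently available that genuinely exploits their conjunction. Consequently, unless one finds a new argument that uses the bimodule Gorenstein-projectivity of $A$ as a whole (and not merely its two halves), a complete proof cannot avoid settling the Nakayama conjecture or the first Tachikawa conjecture; what the reduction above does achieve is to pin the Gorenstein bimodule conjecture down exactly between these two classical problems and to establish it for every algebra for which one of them is already known.
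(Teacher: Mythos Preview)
The statement is posed in the paper as a \emph{conjecture}, not a theorem; the paper offers no proof of it. Your analysis is correct and coincides with the paper's own treatment: the paper establishes (Theorem~\ref{gorbimodtheorem}) that the Gorenstein bimodule conjecture holds for a fixed $A$ if and only if the Nakayama conjecture or the first Tachikawa conjecture holds for $A$ --- precisely the reduction you carry out --- and draws the same corollaries for algebras with finite finitistic dimension and for gendo-symmetric algebras. Your honest acknowledgement that the final step cannot currently be completed without settling one of these open problems is accurate; this is exactly why the statement remains a conjecture in the paper. The only cosmetic difference is that you derive the isomorphism $\Ext_{A^e}^i(A,A^e)\cong\Ext_A^i(D(A),A)$ directly from \ref{isomorphismspropo}(2) and \ref{torformula}(2), whereas the paper simply cites it from \cite{Mar}.
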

The conjecture is known to be true in case $A$ is Gorenstein by results of Shen in \cite{S} and more generally when $A$ is left weakly Gorenstein by results in \cite{Mar}.

The next conjecture is called the first Tachikawa conjecture and was first stated in the book \cite{Ta}. 
\begin{conjecture} \label{Tachikawa conjecture}
A finite dimensional algebra $A$ is selfinjective if and only if $\Ext_A^i(D(A),A)=0$ for all $i \geq 1$.
\end{conjecture}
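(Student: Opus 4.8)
Only one of the two implications is elementary, so I would begin by disposing of it: if $A$ is selfinjective then the regular module $A$ is injective, hence $\Ext_A^i(M,A)=0$ for every $A$-module $M$ and every $i\ge 1$, in particular for $M=D(A)$. The entire content of the statement is therefore the converse --- this is the classical first Tachikawa conjecture, which is open in general --- so what I can offer is the route that the bimodule framework above makes available, together with the point at which it currently stalls.

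Assume then that $\Ext_A^i(D(A),A)=0$ for all $i\ge 1$; the natural target is $\domdim(A)=\infty$, from which the Nakayama conjecture for $A$ --- or, via the comparison theorem proved below, the Gorenstein bimodule conjecture for $A$ --- would give selfinjectivity. The plan is to read the hypothesis off the canonical bimodule $V=\Hom_A(D(A),A)$: by Proposition \ref{isomorphismspropo}(2) one has $V\cong\Hom_{A^e}(A,A^e)=A^{*}$ as $A^e$-modules, and the vanishing $\Ext_A^i(D(A),A)=0$ says precisely that applying $\Hom_A(-,A)$ to a minimal projective resolution of $D(A)$ produces an exact complex. One would then feed this, through the identity $\Ext_{A^e}^{i+2}(\Tr(A),A^e)\cong\Ext_{A^e}^i(A^{*},A^e)$ from the proof of Theorem \ref{mainresult}, into the characterisation of $\domdim(A)$ by the $n$-torsionfreeness of $A$ as an $A$-bimodule, aiming to show that $A$ is $n$-torsionfree for every $n$, i.e.\ that $\domdim(A)=\infty$.

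This programme does go through when $A$ is gendo-symmetric: there $V\cong A$ by Theorem \ref{Fankoetheorem}(1), and Theorem \ref{Fankoetheorem}(2) gives $\domdim(A)=\inf\{i\ge 1\mid \Ext_A^i(D(A),A)\ne 0\}+1$, so the hypothesis forces $\domdim(A)=\infty$ immediately, leaving only the Nakayama conjecture for gendo-symmetric algebras. The main obstacle in general --- and the reason the statement is still a conjecture --- is precisely the failure of such a formula without the gendo-symmetric hypothesis: the groups $\Ext_A^i(D(A),A)$ involve $D(A)$ alone, whereas Theorem \ref{domdimcharatheorem} shows that $\domdim(A)$ is governed by the groups $\Ext_A^i(D(A)\otimes_A V,A)$, and there is no apparent mechanism forcing the latter to vanish merely because the former do. Bridging this gap --- or proving the Nakayama conjecture outright, via which Mueller's reduction in \cite{Mue} would complete the argument --- is the genuinely hard step, and I do not see how to carry it out for an arbitrary algebra $A$.
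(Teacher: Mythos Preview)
Your assessment is exactly right and matches the paper's treatment: the statement is presented in the paper as the \emph{first Tachikawa conjecture}, an open problem, and the paper offers no proof of it. You correctly dispose of the trivial direction, correctly identify the converse as the open content, and correctly locate the obstruction within the paper's bimodule framework --- namely that the vanishing of $\Ext_A^i(D(A),A)$ does not obviously force the vanishing of $\Ext_A^i(D(A)\otimes_A V,A)$ needed in Theorem~\ref{domdimcharatheorem}; there is nothing further to compare.
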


It is known that the truth of the Nakayama conjecture for all algebras $A$ would imply the Tachikawa conjecture, but it is not known whether the truth of the Nakayama conjecture for a fixed algebra implies also that the first Tachikawa conjecture is true for this fixed algebra.
Recall that the \emph{finitistic dimension} of a finite dimensional algebra $A$ is defined as the supremum of all projective dimensions of modules having finite projective dimension. The finitistic dimension conjecture states that all finite dimensional algebras have finite finitistic dimension and it is known that the finitistic dimension conjecture implies the Nakayama conjecture for a given algebra $A$, see for example \cite{Yam} for more on those conjectures and their relation to other homological conjectures.

As an application of our main result of this article we can relate the Gorenstein bimodule conjecture to the Nakayama and first Tachikawa conjectures:
\begin{theorem} \label{gorbimodtheorem}
Let $A$ be a finite dimensional algebra.
Then the following are equivalent for $A$:
\begin{enumerate}
\item The Gorenstein bimodule conjecture holds for $A$.
\item The Nakayama conjecture or the first Tachikawa conjecture holds for $A$.

\end{enumerate}

\end{theorem}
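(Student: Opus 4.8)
The plan is to unwind the Gorenstein bimodule conjecture for $A$ into two separate $\Ext$-vanishing conditions over the enveloping algebra, to match each of them with a classical condition, and then to verify a small propositional tautology. Throughout, I would keep in mind the three ``easy'' implications valid for every finite dimensional algebra: if $A$ is selfinjective then (i) $A$ has infinite dominant dimension; (ii) $\Ext_A^i(D(A),A)=0$ for all $i\geq 1$, because $D(A)$ is then projective; and (iii) $A^e=A\otimes_K A^{op}$ is again selfinjective, so every $A^e$-module, in particular $A$, is Gorenstein projective. Writing $S$ for ``$A$ is selfinjective'', $D$ for ``$\domdim(A)=\infty$'', $T$ for ``$\Ext_A^i(D(A),A)=0$ for all $i\geq 1$'' and $G$ for ``$A$ is Gorenstein projective as an $A^e$-module'', we thus always have $S\Rightarrow D$, $S\Rightarrow T$ and $S\Rightarrow G$; the Nakayama conjecture for $A$ is the implication $D\Rightarrow S$, the first Tachikawa conjecture for $A$ is $T\Rightarrow S$, and the Gorenstein bimodule conjecture for $A$ is $G\Rightarrow S$.

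Next I would decompose $G$. By definition $G$ says $\Ext_{A^e}^i(A,A^e)=0=\Ext_{A^e}^i(\Tr A,A^e)$ for all $i\geq 1$, where $\Tr$ is taken over $A^e$. Vanishing of the second family means precisely that $A$ is $\infty$-torsionfree as an $A^e$-module, which by the infinite version of the main theorem \ref{mainresult} is equivalent to $D$. Hence $G$ holds if and only if $D$ holds together with $\Ext_{A^e}^i(A,A^e)=0$ for all $i\geq 1$.

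Then I would identify $\Ext_{A^e}^i(A,A^e)$ with $\Ext_A^i(D(A),A)$. Taking a projective $A^e$-resolution $Q_\bullet\to A$ and applying $\Hom_{A^e}(-,A^e)$, the natural isomorphism $\Hom_{A^e}(X,A^e)\cong\Hom_A(D(A)\otimes_A X,A)$ of \ref{isomorphismspropo}(2) turns the resulting cochain complex into $\Hom_A(D(A)\otimes_A Q_\bullet,A)$. Since $A^e$ is free as a one-sided $A$-module, $Q_\bullet$ restricts to a split exact resolution of the projective module ${}_AA$, so $D(A)\otimes_A Q_\bullet$ stays exact and, by \ref{torformula}(2) (or its left-right mirror), consists of projective $A$-modules; hence it is a projective resolution of $D(A)\otimes_A A\cong D(A)$ and $\Ext_{A^e}^i(A,A^e)\cong\Ext_A^i(D(A),A)$ for all $i\geq 0$. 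The left/right bookkeeping here is harmless, since dualizing an injective coresolution of $A$ shows that $\Ext_A^i(D(A),A)$ and $\Ext_{A^{op}}^i(D(A),A)$ have the same dimension. Combined with the previous paragraph this yields $G\iff D\wedge T$.

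Finally I would assemble the pieces. Since $S\Rightarrow D\wedge T$ always, $G\Rightarrow S$ is equivalent to $(D\wedge T)\Rightarrow S$, and one checks the purely propositional equivalence $[(D\wedge T)\Rightarrow S]\iff[(D\Rightarrow S)\vee(T\Rightarrow S)]$: the implication $\Leftarrow$ is immediate, and for $\Rightarrow$ one observes that if both $D\Rightarrow S$ and $T\Rightarrow S$ fail then $D$ and $T$ hold while $S$ fails, contradicting $(D\wedge T)\Rightarrow S$. This says exactly that the Gorenstein bimodule conjecture for $A$ holds if and only if the Nakayama conjecture for $A$ or the first Tachikawa conjecture for $A$ holds. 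I expect the main obstacle to be the third step, namely pinning down the isomorphism $\Ext_{A^e}^i(A,A^e)\cong\Ext_A^i(D(A),A)$ cleanly and making sure the $n=\infty$ case of \ref{mainresult} genuinely supplies the torsionfree half of the Gorenstein projective condition; once these are in place, the rest is bookkeeping together with the two-line tautology above.
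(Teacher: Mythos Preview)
Your proposal is correct and follows essentially the same route as the paper: decompose Gorenstein projectivity of the bimodule $A$ into the $\infty$-torsionfree half (handled by Theorem~\ref{mainresult}) and the vanishing of $\Ext_{A^e}^i(A,A^e)$, identify the latter with $\Ext_A^i(D(A),A)$, and conclude. The only differences are that you prove the isomorphism $\Ext_{A^e}^i(A,A^e)\cong\Ext_A^i(D(A),A)$ inline via Proposition~\ref{isomorphismspropo}(2) and the tensor--restrict argument, whereas the paper simply cites it from \cite{Mar}, and you spell out the propositional equivalence $[(D\wedge T)\Rightarrow S]\iff[(D\Rightarrow S)\vee(T\Rightarrow S)]$ that the paper leaves to the reader.
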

\begin{proof}
This follows directly from our main result \ref{mainresult} by noting that $A$ as a bimodule is $\infty$-torsionfree if and only if $A$ has infinite dominant dimension and $\Ext_{A^e}^i(A,A^e) \cong \Ext_A^i(D(A),A)$ for all $i >0$ by lemma 2.1. of \cite{Mar}.

\end{proof}

\begin{corollary}
Let $A$ be an algebra of finite finitistic dimension, then $A$ satisfies the Gorenstein bimodule conjecture.

\end{corollary}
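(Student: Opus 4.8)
The plan is to reduce the statement immediately to Theorem~\ref{gorbimodtheorem} and then quote a classical implication among the homological conjectures. By Theorem~\ref{gorbimodtheorem}, the algebra $A$ satisfies the Gorenstein bimodule conjecture if and only if $A$ satisfies the Nakayama conjecture or the first Tachikawa conjecture. Hence it suffices to show that an algebra $A$ with finite finitistic dimension satisfies (at least) the Nakayama conjecture, that is, that $\domdim(A)=\infty$ forces $A$ to be selfinjective.

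For this I would invoke the well-known fact, recorded for instance in the survey~\cite{Yam}, that the finitistic dimension conjecture implies the Nakayama conjecture \emph{for a fixed algebra}: if $A$ has finite finitistic dimension, then the Nakayama conjecture holds for $A$. Combining this with Theorem~\ref{gorbimodtheorem} then yields that $A$ satisfies the Gorenstein bimodule conjecture, which is exactly the assertion of the corollary.

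The only ingredient carrying any content here is the citation of the finitistic-dimension-implies-Nakayama result; the rest is a direct application of Theorem~\ref{gorbimodtheorem}, so there is essentially no obstacle to overcome beyond quoting the correct classical statement. If one wanted a self-contained argument, the place where work would be needed is precisely reproving that $\mathrm{findim}(A)<\infty$ together with $\domdim(A)=\infty$ implies selfinjectivity, but since this is standard and already referenced in the preliminaries, I would simply cite it.
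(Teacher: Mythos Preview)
Your proposal is correct and follows exactly the same route as the paper: invoke Theorem~\ref{gorbimodtheorem} and the classical fact (referenced in \cite{Yam}) that finite finitistic dimension implies the Nakayama conjecture for a fixed algebra. The paper's proof is just a one-sentence version of what you wrote.
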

\begin{proof} 
Since having finite finitistic dimension implies that $A$ satisfies the Nakayama conjecture, 
by \ref{gorbimodtheorem} $A$ also satisfies the Gorenstein bimodule conjecture. 
\end{proof}
In particular, the Gorenstein bimodule conjecture is true for Gorenstein, monomial or local algebras since all such algebras have finite finitistic dimension.

For gendo-symmetric algebras the Gorenstein bimodule conjecture is even equivalent to the Nakayama conjecture, we state this is a corollary in the following form:

\begin{corollary}
Let $A$ be a gendo-symmetric algebra.
Then the following are equivalent:
\begin{enumerate}
\item $A$ is Gorenstein projective as an $A$-bimodule.
\item $A$ has infinite dominant dimension.

\end{enumerate}

\end{corollary}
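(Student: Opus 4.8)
The plan is to prove the implication (1) $\Rightarrow$ (2) for an arbitrary finite dimensional algebra and to invoke the gendo-symmetric hypothesis only for the converse. Recall that $A$ being Gorenstein projective as an $A^e$-module means $\Ext_{A^e}^i(A,A^e)=0=\Ext_{A^e}^i(\Tr(A),A^e)$ for all $i \geq 1$, and that the second family of vanishing conditions says precisely that $A$ is $\infty$-torsionfree as a bimodule, which by Theorem~\ref{mainresult} (equivalently, by the corollary stated at the end of Section~2) is the same as $\domdim(A)=\infty$.

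For (1) $\Rightarrow$ (2): if $A$ is Gorenstein projective as a bimodule then in particular $\Ext_{A^e}^i(\Tr(A),A^e)=0$ for all $i \geq 1$, so $A$ is $\infty$-torsionfree as a bimodule and hence $\domdim(A)=\infty$. For (2) $\Rightarrow$ (1): assume $\domdim(A)=\infty$. By Theorem~\ref{mainresult} again, $A$ is $n$-torsionfree as a bimodule for every $n$, so $\Ext_{A^e}^i(\Tr(A),A^e)=0$ for all $i \geq 1$; it remains to verify $\Ext_{A^e}^i(A,A^e)=0$ for all $i \geq 1$. Here I would use the identification $\Ext_{A^e}^i(A,A^e) \cong \Ext_A^i(D(A),A)$ (lemma 2.1 of \cite{Mar}, exactly as in the proof of Theorem~\ref{gorbimodtheorem}) together with the defining property of a gendo-symmetric algebra: by Theorem~\ref{Fankoetheorem}(2) we have $\domdim(A)=\inf\{i \geq 1 \mid \Ext_A^i(D(A),A)\neq 0\}+1$, so $\domdim(A)=\infty$ forces $\Ext_A^i(D(A),A)=0$, hence $\Ext_{A^e}^i(A,A^e)=0$, for all $i \geq 1$. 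Combining the two families of vanishing statements shows that $A$ is Gorenstein projective as a bimodule, which completes the equivalence.

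I expect no genuine obstacle beyond assembling these ingredients; the conceptual content is that for a general algebra, infinite dominant dimension yields only the $\Tr$-side vanishing (i.e. $\infty$-torsionfreeness of the bimodule $A$), whereas the $A$-side vanishing $\Ext_{A^e}^i(A,A^e)=0$ is precisely the first Tachikawa condition and need not follow. For gendo-symmetric algebras, Theorem~\ref{Fankoetheorem}(2) forces the first Tachikawa condition to be equivalent to infinite dominant dimension, and this is exactly what closes the gap between "$\infty$-torsionfree bimodule" and "Gorenstein projective bimodule".
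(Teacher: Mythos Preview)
Your proof is correct and follows essentially the same approach as the paper: both combine the main result (infinite dominant dimension $\Leftrightarrow$ $A$ is $\infty$-torsionfree as a bimodule), the identification $\Ext_{A^e}^i(A,A^e)\cong\Ext_A^i(D(A),A)$ from \cite{Mar}, and Theorem~\ref{Fankoetheorem}(2) to handle the $A$-side vanishing via the gendo-symmetric hypothesis. Your write-up is simply more explicit in separating the two implications and in observing that $(1)\Rightarrow(2)$ holds for arbitrary $A$.
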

\begin{proof}
This follows directly from the fact that for gendo-symmetric algebras, the dominant dimension of $A$ is given by $\domdim(A) = \inf \{ i \geq 1 | \Ext_A^i(D(A),A) \neq 0 \}+1$, see \ref{Fankoetheorem} (2) (and that $\Ext_{A^e}^i(A,A^e) \cong \Ext_A^i(D(A),A)$ for all $i >0$), and by our main result \ref{mainresult} that the dominant dimension is infinite if and only if $A$ as a bimodule is $\infty$-torsionfree.

\end{proof}

\section*{Acknowledgments} 
Rene Marczinzik is funded by the DFG with the project number 428999796.

\end{document}